\author{Adam Rennie}
\author{David Robertson}
\author{Aidan Sims}
\email{renniea@uow.edu.au, dave84robertson@gmail.com, asims@uow.edu.au}
\address{School of Mathematics and Applied Statistics\\
University of Wollongong\\
Wollongong\\ NSW\\ 2522\\
AUSTRALIA}
\subjclass[2010]{46L05}
\keywords{Groupoid; $C^*$-correspondence; Product system; Fell bundle}
\newtheorem{theorem}{Theorem}[section]
\newtheorem{lemma}[theorem]{Lemma}
\newtheorem{proposition}[theorem]{Proposition}
\newtheorem{corollary}[theorem]{Corollary}
\theoremstyle{definition}
\newtheorem{definition}[theorem]{Definition}
\theoremstyle{remark}
\newtheorem{remark}[theorem]{Remark}
\newtheorem{example}[theorem]{Example}
\numberwithin{equation}{section}
\newcommand{\N}{\mathbb{N}}
\newcommand{\Z}{\mathbb{Z}}
\newcommand{\inv}{^{-1}}
\newcommand{\Gg}{\mathcal{G}}
\newcommand{\Hh}{\mathcal{H}}
\newcommand{\Kk}{\mathcal{K}}
\newcommand{\Ll}{\mathcal{L}}
\newcommand{\Tt}{\mathcal{T}}
\newcommand{\Oo}{\mathcal{O}}
\newcommand{\Zz}{\mathcal{Z}}
\newcommand{\NT}{\mathcal{NT}}
\newcommand{\NO}{\mathcal{NO}}
\newcommand{\Ee}{\mathscr{E}}
\newcommand{\Omax}{\Omega_{\text{max}}}
\DeclareMathOperator{\Her}{Her}
\DeclareMathOperator{\opsp}{span}
\DeclareMathOperator{\clsp}{\overline{\opsp}}
\begin{document}
\title[Product systems and Fell bundles]{Groupoid Fell bundles for product systems over quasi-lattice ordered groups}

\begin{abstract}
Consider a product system over the positive cone of a quasi-lattice ordered group. We
construct a Fell bundle over an associated groupoid so that the cross-sectional algebra
of the bundle is isomorphic to the Nica--Toeplitz algebra of the product system. Under the
additional hypothesis that the left actions in the product system are implemented by
injective homomorphisms, we show that the cross-sectional algebra of the restriction of
the bundle to a natural boundary subgroupoid coincides with the Cuntz--Nica--Pimsner
algebra of the product system. We apply these results to improve on existing sufficient
conditions for nuclearity of the Nica--Toeplitz algebra and the Cuntz--Nica--Pimsner
algebra, and for the Cuntz--Nica--Pimsner algebra to coincide with its co-universal
quotient.
\end{abstract}

\maketitle

\section{Introduction}

In \cite{Pimsner1997}, Pimsner associated to each $C^*$-correspondence over a
$C^*$-algebra $A$ two $C^*$-algebras $\Tt_X$ and $\Oo_X$. His construction simultaneously
generalised the Cuntz--Krieger algebras and their Toeplitz extensions, graph
$C^*$-algebras and crossed products by $\mathbb{Z}$, and has been intensively studied
ever since.

It is standard these days to present $\Tt_X$ as the universal $C^*$-algebra generated by
a representation of the module $X$, and then $\Oo_X$ as the quotient of $\Tt_X$
determined by a natural covariance condition. However, this was not Pimsner's original
definition. In \cite{Pimsner1997}, $\Oo_X$ is by definition the quotient of the image of
the canonical representation of $X$ as creation operators on its Fock space by the ideal
of compact operators on the Fock space. Pimsner then provided two alternative
presentations of $\Oo_X$, the second of which is the one in terms of its universal
property. The first, which is the one germane to this paper, is an analogue of the
realisation of $C(\mathbb{T})$ by dilation of the canonical representation of the
classical Toeplitz algebra on $\ell^2$. Pimsner constructed a direct-limit module
$X_\infty$ over the direct limit $A_\infty$ of the algebras of compact operators on the
tensor powers of $X$. He showed that one can make sense of $X^{\otimes n}_\infty$ for all
integers $n$, and so form a 2-sided Fock space $\bigoplus_{n \in \Z} X^{\otimes
n}_\infty$. This space carries a natural representation of $X_\infty$ by translation
operators, and the image generates $\Oo_{X_\infty}$ which is isomorphic to $\Oo_X$.

More recently \cite{Fowler2002}, Fowler introduced compactly aligned product systems of
Hilbert $A$--$A$ bimodules over the positive cones in quasi-lattice ordered groups
$(G,P)$, and studied associated $C^*$-algebras $\Tt_X$ and $\Oo_X$, and an interpolating
quotient $\NT_X$ (Fowler denoted it by $\Tt_{\operatorname{cov}}(X)$, but we follow the
notation of \cite{BaHLR}). When $(G, P) = (\Z, \N)$, $\Tt_X = \NT_X$ agrees with
Pimsner's Toeplitz algebra, and $\Oo_X$ with Pimsner's Cuntz-Pimsner algebra. But even
for $(\Z^2, \N^2)$ the situation is more complicated. The algebra $\NT_X$ is essentially
universal for the relations encoded by the natural Fock representation of $X$, so it is a
natural analogue of Pimsner's Toeplitz algebra. But the quotient by the ideal of compact
operators on the Fock space is much too large to behave like an analogue of Pimsner's
$\Oo_X$. (This is analogous to the fact that $C^*(\Z)$ is the quotient of $C^*(\N)$ by
the compact operators on $\ell^2(\N)$, but $C^*(\Z^2)$ is much smaller than the quotient
of $C^*(\N^2)$ by the compact operators on $\ell^2(\N^2)$.) Fowler also lacked an
analogue of $X_\infty$; the direct limit should be taken over $P$, but $P$ is typically
not directed. So Fowler's approach to defining $\Oo_X$ was to mimic Pimsner's second
alternative presentation of $\Oo_X$: identify a natural covariance relation and
\emph{define} $\Oo_X$ as the universal quotient of $\Tt_X$ determined by this relation.
Subsequent papers \cite{SY, CLSV2009} have modified Fowler's definition to accommodate
various levels of additional generality, but have taken the same fundamental approach of
defining $\NO_X$ as the universal $C^*$-algebra determined by a representation of $\Tt_X$
satisfying some additional essentially ad hoc relations. Nevertheless, there is strong
evidence \cite{Fowler2002, CLSV2009} that the resulting $C^*$-algebra $\NO_X$ can
profitably be regarded as a generalised crossed product of the coefficient algebra $A$ by
the group $G$. In particular, in the case that $(G, P) = (\Z^k, \N^k)$ and $X$ is the
product system arising from an action $\alpha$ of $\N^k$ on $A$ by endomorphisms, a new
characterisation and analysis of $\NO_X$, closely related to Pimsner's dilation approach,
is achieved in \cite{DFK} using the powerful machinery of Arveson envelopes of
non-self-adjoint operator algebras. The authors answer in the affirmative a question
raised in \cite{CLSV2009} about whether $\NO_X$ can be recovered using Arveson's
approach, and use this to show, amongst other things, that $\NO_X$ is Morita equivalent
(in fact isomorphic in the case that the $\alpha_p$ are all injective) to a genuine
crossed-product by $\Z^k$.

In this paper we provide an analogue of Pimsner's first representation of $\Oo_X$ that is
applicable to compactly aligned product systems over quasi-lattice ordered groups, under
the additional hypothesis that the left $A$-actions are implemented by nondegenerate
injective homomorphisms $\phi_p : A \to \Ll(X_p)$. Our approach is to use a natural
groupoid $\Gg$ associated to $(G, P)$ \cite{MR1982}, and construct a Fell bundle over
$\Gg$ whose cross-sectional $C^*$-algebra coincides with $\NT_X$. The groupoid $\Gg$ has
a natural boundary, which is a closed subgroupoid (see \cite{CL}), and the restriction of
our Fell bundle to this boundary subgroupoid has cross-sectional algebra isomorphic to
the algebra $\NO_X$ of \cite{SY}. This is strong evidence that the relations recorded in
\cite{SY} are the right ones, at least for nondegenerate product systems with injective
left actions. As practical upshots of our results, we deduce that if the groupoid $\Gg$
is amenable, then: (1) each of $\NT_X$ and $\NO_X$ is nuclear whenever the coefficient
algebra $A$ is nuclear, and (2) $\NO_X$ coincides with its co-universal quotient
$\NO^r_X$ as in \cite{CLSV2009}. This improves on previous results along these lines,
which assume that the group $G$ is amenable, a stronger hypothesis than amenability of
$\Gg$.

We mention that the work of Kwasniewski and Szyma\'nski in \cite{KS}, is related to our
construction. There the authors consider product systems over semigroups $P$ that satisfy
the Ore condition but are not necessarily part of a quasi-lattice ordered pair, and
assume that the left actions in the product system are by compact operators. Here, by
contrast, we insist that $P$ is quasi-lattice ordered, but do not require compact
actions. Both approaches use the machinery of Fell bundles, but Kwasniewski and
Szyma\'nski construct Fell bundles over the enveloping group $G$ of $P$, whereas we
construct a bundle over the associated groupoid $\Gg$; as mentioned above, an advantage
of the latter is that $\Gg$ can be amenable even when $G$ is not.

\section{Preliminaries}

\subsection{Product systems over quasi-lattice ordered groups}

Let $G$ be a discrete group and let $P$ be a subsemigroup of $G$ satisfying $P\cap P\inv
= \{ e \}$. Define a partial order $\leq$ on $G$ by
\[
 g \leq h \Longleftrightarrow g\inv h \in P.
\]
We call the pair $(G,P)$ a \emph{quasi-lattice ordered group} if, whenever two elements
$g,h \in G$ have a common upper bound in $G$, they have a least common upper bound $g\vee
h$ in $G$. We write $g\vee h < \infty$ if two elements $g,h \in G$ have a common upper
bound and $g\vee h = \infty$ otherwise.

A \emph{product system} over a quasi-lattice ordered group $(G,P)$ is a semigroup $X$
equipped with a semigroup homomorphism $d : X \to P$ such that the following hold. For
each $p\in P$, let $X_p = d\inv(p)$. Then we require that $A = X_e$ is a $C^*$-algebra,
thought of as a right-Hilbert module over itself in the usual way, and that each $X_p$ is
a right-Hilbert $A$-module together with a left action of $A$ by adjointable operators
denoted $\varphi_p : A \to \Ll(X_p)$. We require that $\varphi_e$ is given by left
multiplication. Furthermore, for each $p,q \in P$ with $p\neq e$, we require that
multiplication in $X$ determines a Hilbert bimodule isomorphism $X_p\otimes_A X_q \to
X_{pq}$ satisfying $x_p\otimes x_q \mapsto x_p x_q$. The product system is
\emph{nondegenerate} if multiplication $X_e \times X_p \to X_p$ also determines an
isomorphism $X_e \otimes_a X_p \to X_p$ for each $p$; that is, if each $X_p$ is
nondegenerate as a left $A$-module. Every right Hilbert module is automatically
nondegenerate as a right $A$-module by the Hewitt-Cohen factorisation theorem.

If $p,q \in P$ satisfy $e\neq p\leq q$, then there is a homomorphism $i_{p\inv q} :
\Ll(X_p) \to \Ll(X_q)$ characterised by
\[
 i_{p\inv q}(S)(xy) = (Sx)y \ \textrm{ for all } x\in X_p,\ y\in X_{p\inv q}.
\]
If we identify $A$ with $\Kk(X_e)$ in the usual way then the corresponding map $i_p :
\Kk(X_e) \to \Ll(X_p)$ is $i_p = \varphi_p$. We say that a product system $X$ is
\emph{compactly aligned} if, whenever $S\in\Kk(X_p), T \in \Kk(X_q)$ and $p\vee q <
\infty$ we have
\[
 i_{p\inv(p\vee q)}(S)i_{q\inv(p\vee q)}(T) \in \Kk(X_{p\vee q}).
\]
If $g\in G\setminus P$ we define $i_g$ to be $0$.

\begin{example}
The pair $(\mathbb{Z},\mathbb{N})$ is a quasi-lattice ordered group, where $\leq$ agrees
with the usual ordering on $\mathbb{Z}$. Let $A$ be a $C^*$-algebra and let $E$ be an
$A$-correspondence; i.e. $E$ is a right Hilbert $A$-module with a left action $A \to
\Ll(E)$. Let $X_0:= A$ and for each $n \in \mathbb{N} \setminus \{0\}$ let $X_n :=
E^{\otimes n}$. Then
\[\textstyle
 X := \bigcup_{n\in\mathbb{N}} X_n
\]
is a product system over $(\mathbb{Z},\mathbb{N})$. With multiplication given by $\xi\eta
:= \xi \otimes \eta$.
\end{example}

\begin{example}
For each $k\geq 1$, the pair $(\mathbb{Z}^k,\mathbb{N}^k)$ is a quasi lattice ordered
group where, for $m,n\in\mathbb{Z}^k$ and $1\leq i \leq k$
\[
 (m\vee n)_i = \max\{m_i,n_i\}.
\]
Suppose that $(\Lambda,d)$ is a $k$-graph. For each $n\in\mathbb{N}^k$, $C_c(d\inv(n))$
is a pre-Hilbert $A = C_0(\Lambda^0)$ module. Let $X_n = \overline{C_c(d\inv(n))}$. Then
\[\textstyle
 X =  \bigcup_{n\in\mathbb{N}^k} X_n
\]
is a product system over $(\mathbb{Z}^k,\mathbb{N}^k)$. (See \cite{RS}.)
\end{example}

\subsection{Representations of product systems}

For details of the following, see \cite{CLSV2009, Fowler2002, SY}.

\begin{definition}
\label{def:nica-cov} Let $X$ be a compactly aligned product system over a quasi-lattice
ordered group $(G,P)$. A \emph{Toeplitz representation} of $X$ in a $C^*$-algebra $B$ is
a map $\psi : X \to B$ satisfying
\begin{enumerate}
 \item[(T1)]\label{T1} $\psi_p := \psi|_{X_p} : X_p \to B$ is linear for all $p\in P$
     and $\psi_e$ is a homomorphism,
 \item[(T2)]\label{T2} $\psi(xy) = \psi(x)\psi(y)$ for all $x,y \in X$, and
 \item[(T3)]\label{T3} for any $p\in P$ and $x,y\in X_p$, $\psi(\langle x,y \rangle)
     = \psi(x)^*\psi(y)$.
\end{enumerate}

Given a Toeplitz respresentation $\psi : X \to B$, for each $p\in P$ there is a
homomorphism $\psi^{(p)} : \Kk(X_p) \to B$ satisfying
\[
 \psi^{(p)}(\theta_{x,y}) = \psi_p(x)\psi_p(y)^*.
\]
We call a Toeplitz representation $\psi : X \to B$ \emph{Nica covariant} if
\begin{enumerate}
 \item[(N)]\label{N} for all $S\in\Kk(X_p), T\in\Kk(X_q)$ we have
 \[
 \psi^{(p)}(S)\psi^{(q)}(T) = \left\{ \begin{array}{ll}
 \psi^{p\vee q}\left( i_{p\inv(p\vee q)}(S)i_{q\inv(p\vee q)}(T) \right) & \textrm{ if } p\vee q< \infty \\
 0 & \textrm{ otherwise}.
 \end{array} \right.
 \]
\end{enumerate}
Following \cite{BaHLR}, we will write $\NT_X$ for the universal $C^*$-algebra generated
by a Nica-covariant Toeplitz representation $i_X$ of $X$. (Fowler shows that such a
$C^*$-algebra exists in \cite{Fowler2002}, but denotes it $\Tt_{\operatorname{cov}}(X)$.)
\end{definition}

Given a predicate $\mathcal{P}$ on $P$, we say $\mathcal{P}$ is true \emph{for large s}
if for every $q\in P$, there exists an $r\geq q$ such that $\mathcal{P}(s)$ is true
whenever $s\geq r$.

We now present the definition of the Cuntz--Nica--Pimsner algebra $\NO_X$ of a product
system $X$ under the assumption that the left action on each fibre is implemented by an
injective homomorphism $\varphi_p$. This hypothesis is not needed for $\NO_X$ to make
sense (see \cite{SY}); but if the left actions are not implemented by injective
homomorphisms, then the relation~(CNP) as described below does not hold in $\NO_X$. In
particular, this hypothesis will be necessary in all statements that involve
Cuntz-Nica-Pimsner covariance and representations of $\NO_X$:
Proposition~\ref{prop:cp_covariant_representation}, Theorem~\ref{thm:CNPisomorphism}, and
the results in Section~\ref{sec:applications}

\begin{definition}
\label{def:c-n-p-cov} Let $X$ be a compactly aligned product system over a quasi-lattice
ordered group $(G,P)$ and suppose that for each $p\in P$ the left action $\phi_p : A \to
\Ll(X_p)$ is injective. We say a Nica covariant Toeplitz representation $\psi : X \to B$
is \emph{Cuntz--Nica--Pimsner covariant} if it satisfies the following property:
\begin{enumerate}
\item[(CNP)]\label{CNP} for each finite $F \subset P$ and collection of elements $T_p
    \in \Kk(X_p)$, $p \in F$,
    \[\textstyle
        \text{if }\sum_{p\in F} i_{p\inv q}(T_p) = 0\text{ for large $q$,}\qquad\text{ then }\qquad
            \sum_{p\in F} \psi^{(p)}(T_p) = 0.
    \]
\end{enumerate}
We write $\NO_X$ for the universal $C^*$-algebra generated by a Cuntz--Nica--Pimsner
covariant representation $j_X$ of $X$.
\end{definition}

\subsection{Fell bundles over groupoids} We say that a groupoid $\Gg$ is a
\emph{topological groupoid} if $\Gg$ is a topological space and the multiplication and
inversion are continuous functions. We call a topological groupoid $\Gg$ \emph{\'etale}
if the unit space $\Gg^{(0)}$ is locally compact and Hausdorff, and the range map $r :
\Gg \to \Gg^{(0)}$ is a local homeomorphism. It follows that the source map $s$ is also a
local homeomorphism. A \emph{bisection} of $G$ is an open subset $U \subseteq G$ such
that $r|_U$ and $s|_U$ are homeomorphisms; the topology of a Hausdorff \'etale groupoid
admits a basis consisting of bisections. See \cite{Exel} for an overview of \'etale
groupoids.

Given a Hausdorff \'etale groupoid $\Gg$, a \emph{Fell bundle} over $\Gg$ is an
upper-semicontinuous Banach bundle $p : \Ee\to\Gg$ with a multiplication
\[
 \Ee^{(2)} = \{ (e,f) \in \Ee\times \Ee : (p(e),p(f))\in\Gg^{(2)} \} \to \Ee
\]
and an involution
\[
 * : \Ee \to \Ee, \ e\mapsto e^*
\]
satisfying the following properties:
\begin{enumerate}
\item\label{it:FB1} the multiplication is associative and bilinear, whenever it makes
    sense;
\item $p(ef) = p(e)p(f)$ for all $(e,f)\in \Ee^{(2)}$;
\item multiplication is continuous in the relative topology on $\Ee^{(2)} \subseteq
    \Ee\times \Ee$;
\item $\|ef\| \leq \|e\| \|f\|$ for all $(e,f)\in \Ee^{(2)}$;
\item $p(e^*) = p(e)^{-1}$  for all $e\in \Ee$, and involution is continuous and
    conjugate linear;
\item $(e^*)^* = e, \|e^*\| = \|e\|$ and $(ef)^* = f^* e^*$ for all $(e,f)\in
    \Ee^{(2)}$;
\item\label{it:FB7} $\|e^* e\| = \|e\|^2$ for all $e \in \Ee$;
\item $e^*e \geq 0$ as an element of $p^{-1}(s(p(e)))$---which is a $C^*$-algebra by
    (\ref{it:FB1})--(\ref{it:FB7})---for all $e\in \Ee$.
\end{enumerate}
We denote by $E_\gamma$ the fibre $p\inv(\gamma) \subset \Ee$.

Given a Fell bundle $\Ee$ over a locally compact Hausdorff \'etale groupoid, we write
$\Gamma_c(\Gg;\Ee)$ for the vector space of continuous, compactly supported sections $\xi
: \Gg \to \Ee$. If $\Hh \subseteq \Gg$ is a closed subset, we will write $\Gamma_c(\Hh;
\Ee)$ for the compactly supported sections of the restriction of $\Ee$ to $\Hh$; that is,
$\Gamma_c(\Hh; \Ee) := \Gamma_c(\Hh; \Ee|_\Hh)$.

There are a convolution and involution on $\Gamma_c(\Gg;\Ee)$ such that for $\xi,\eta \in
\Gamma_c(\Gg;\Ee)$,
\[
 (\xi * \eta)(\gamma) = \displaystyle\sum_{\alpha\beta = \gamma} \xi(\alpha)\eta(\beta)
 \quad\text{ and }\quad
 \xi^*(\gamma) = \xi(\gamma\inv)^*.
\]
This gives $\Gamma_c(\Gg;\Ee)$ the structure of a $*$-algebra. The \emph{$I$-norm} on
$\Gamma_c(\Gg;\Ee)$ is given by
\[
 \|f\|_I := \sup_{u\in\Gg^{(0)}}\Big( \max\Big( \sum_{s(\gamma)=u} \|f(\gamma)\|, \sum_{r(\gamma)=u} \|f(\gamma)\| \Big) \Big).
\]
A $*$-homomorphism $L : \Gamma_c(\Gg;\Ee) \to \mathcal{B}(\mathcal{H}_L)$ is called a
\emph{bounded representation} if $\|L(f)\| \leq \|f\|_I$ for all $f\in\Gamma_c(\Gg;\Ee)$.
It is \emph{nondegenerate} if $\clsp\{ L(f)\xi : f\in \Gamma_c(\Gg;\Ee), \xi \in
\mathcal{H}_L \} = \mathcal{H}_L$ is dense. The \emph{universal $C^*$-norm} on
$\Gamma_c(\Gg;\Ee)$ is
\[
 \|f\| := \sup \{ \|L(f)\| : L \textrm{ is an bounded representation} \}.
\]
We define the \emph{cross-sectional algebra} $C^*(\Gg,\Ee)$ to be the completion of
$\Gamma_c(\Gg;\Ee)$ with respect to the universal $C^*$-norm. If $\Hh \subseteq \Gg$ is a
closed subgroupoid, then we write $C^*(\Hh, \Ee)$ for the completion of $\Gamma_c(\Hh,
\Ee)$ in the universal norm on $\Gamma_c(\Hh, \Ee)$.

\section{From a product system to a Fell bundle}\label{sec:Fell bundle}

In this section, given a product system $X$ over a quasi-lattice ordered group $(G,P)$,
we construct a groupoid $\Gg$ and a Fell bundle $\Ee$ over $\Gg$. We will show in
Section~\ref{sec:isomorphisms} that the $C^*$-algebra of this Fell bundle coincides with
the Nica--Toeplitz algebra of $X$, and has a natural quotient that coincides with the
Cuntz--Nica--Pimsner algebra.

\textbf{Standing notation:} We fix, for the duration of Section~\ref{sec:Fell bundle}, a
quasi-lattice ordered group $(G,P)$, and a nondegenerate compactly aligned product system
$X$ over $P$. For the time being, we do not require that the left actions on the fibres
of $X$ are implemented by injective homomorphisms; as mentioned before, this additional
hypothesis will be needed only in Proposition~\ref{prop:cp_covariant_representation},
Theorem~\ref{thm:CNPisomorphism}, and the results of Section~\ref{sec:applications}.

\subsection{The groupoid}

We first construct a groupoid from $(G, P)$. This construction is by no means new---for
example, it appears in the work of Muhly and Renault \cite{MR1982} in the context of
Weiner-Hopf algebras. Fix a quasi-lattice ordered group $(G,P)$. We say that $\omega
\subset G$ is \emph{directed} if
\[
 g,h \in \omega \implies \infty\neq g\vee h \in \omega
\]
and \emph{hereditary} if
\[
 h\in \omega\text{ and }g\leq h \implies g\in\omega.
\]
Let $\Omega = \{\omega \subset G : \omega \textrm{ is directed and hereditary}\}$. With
the relative product topology induced by identifying $\Omega$ with a subset of
$\{0,1\}^G$ in the usual way, $\Omega$ is a totally disconnected compact Hausdorff space:
the sets
\[
 \Zz(A_0,A_1) := \{\omega \in \Omega : g\in A_i \implies \chi_\omega (g) = i \},
\]
indexed by pairs $A_0, A_1$ of finite subsets of $G$ constitute a basis of compact open
sets.

We say that $\omega\in\Omega$ is \emph{maximal} if $\omega \subset \rho \in \Omega$
implies $\omega = \rho$. Let $\Omax = \{\omega\in \Omega : \omega \text{ is maximal}\}$.
Define the \emph{boundary} of $\Omega$ to be
\[
 \partial\Omega := \overline{\Omax} \subset \Omega.
\]

Given $g\in G$ and $\omega \in \Omega$, let
\[
 g\omega := \{gh:h\in\omega\}.
\]
For finite $A_0, A_1 \subseteq G$ and $g \in G$, we have $g\inv \Zz(A_0,A_1) = \Zz(g\inv
A_0,g\inv A_1)$. Hence $g\cdot \omega := g\omega$ defines an action of $G$ by
homeomorphisms of $\Omega$. Given $p\in P$, the set $\omega_p := \{g\in G : g\leq p\}$
belongs to $\Omega$, so we can regard $P$ as a subset of $\Omega$.

\begin{proposition}
The boundary $\partial\Omega$ is invariant under the action of $G$.
\end{proposition}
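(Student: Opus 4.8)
The plan is to show that the homeomorphism $\omega \mapsto g\omega$ maps $\partial\Omega$ into itself for each $g \in G$. Since $\partial\Omega = \overline{\Omax}$ is the closure of the maximal elements, and since $\omega \mapsto g\omega$ is a homeomorphism, it suffices to prove that the set of maximal elements $\Omax$ is invariant under the $G$-action; the result for $\partial\Omega$ then follows by taking closures, using that a homeomorphism maps the closure of a set onto the closure of its image. So the key reduction is
\[
 g\cdot\Omax = \Omax \qquad\text{for all } g\in G,
\]
after which $g\cdot\partial\Omega = g\cdot\overline{\Omax} = \overline{g\cdot\Omax} = \overline{\Omax} = \partial\Omega$.

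First I would verify that each translation map $\omega \mapsto g\omega$ sends directed hereditary sets to directed hereditary sets, so that it genuinely restricts to a bijection of $\Omega$; this is already implicit in the statement that $G$ acts by homeomorphisms of $\Omega$, so I may assume it. The crux is then to check that maximality is preserved. The natural approach is to observe that translation by $g$ is order-theoretically compatible with the inclusion ordering on $\Omega$: because $h \mapsto gh$ is a bijection of $G$, we have $\omega \subseteq \rho$ if and only if $g\omega \subseteq g\rho$. Hence $\omega$ is maximal in $(\Omega, \subseteq)$ precisely when $g\omega$ is maximal, provided translation carries $\Omega$ bijectively to $\Omega$ in an inclusion-preserving way. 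I would spell this out as follows: suppose $\omega \in \Omax$ and suppose $g\omega \subseteq \rho$ for some $\rho \in \Omega$. Applying the (homeomorphism, hence inclusion-preserving) inverse translation by $g\inv$, we get $\omega = g\inv(g\omega) \subseteq g\inv\rho$, and $g\inv\rho \in \Omega$ since the action preserves $\Omega$; maximality of $\omega$ forces $\omega = g\inv\rho$, whence $g\omega = \rho$. Thus $g\omega$ is maximal, giving $g\cdot\Omax \subseteq \Omax$, and applying the same argument to $g\inv$ yields the reverse inclusion.

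The step I expect to require the most care is confirming that the action genuinely preserves inclusions and preserves the property of being directed and hereditary, rather than merely permuting the points of $\Omega$; once that bookkeeping is in place the maximality argument is a short symmetric inclusion chase. In particular I would want to be sure that $g\inv\rho$ lands back in $\Omega$ for arbitrary $\rho \in \Omega$, which follows from the fact that $G$ acts on $\Omega$ by homeomorphisms (so in particular $g\inv$ maps $\Omega$ to $\Omega$), a fact already established in the excerpt via the identity $g\inv\Zz(A_0,A_1) = \Zz(g\inv A_0, g\inv A_1)$. With that granted, the proof reduces to the two observations above, and invariance of $\partial\Omega$ is immediate by continuity of the action and the identity $g\cdot\overline{S} = \overline{g\cdot S}$ for homeomorphisms.
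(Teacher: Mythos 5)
Your proof is correct and follows essentially the same route as the paper: reduce to showing $\Omax$ is $G$-invariant (using that each translation is a homeomorphism, so closures are preserved), then verify maximality of $g\omega$ by pulling an inclusion $g\omega \subseteq \rho$ back to $\omega \subseteq g\inv\rho$ and invoking maximality of $\omega$. The extra bookkeeping you flag (that translation preserves $\Omega$ and inclusions) is indeed exactly what the paper takes as already established.
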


\begin{proof} By continuity of the $G$-action, it suffices to show that $\Omax$ is invariant.
Fix $\omega\in\Omax$ and $g\in G$ and suppose that $g\omega \subset \rho$ for some
$\rho\in\Omega$. Then $\omega \subset g\inv\rho$ and hence $\omega = g\inv\rho$, since
$\omega$ is maximal. So $g\omega = gg\inv\rho = \rho$.
\end{proof}

The set
\[
 \Gg = \{(g,\omega) : P\cap\omega \neq\varnothing\text{ and } P\cap g\omega \neq\varnothing \}
\]
becomes a groupoid when endowed with the operations
\[
 (g,h\omega)(h,\omega) = (gh,\omega)\quad\text{ and }\quad
 (g,\omega)\inv = (g\inv,g\omega).
\]
The unit space is $\{e\} \times \Omega$, which we identify with $\Omega$, and the structure
maps are
\[
r(g,\omega) = (e,g\omega) \quad\textrm{ and }\quad s(g,\omega) = (e,\omega).
\]
One can check that $\Gg$ is equal to the restriction of the transformation groupoid $G
\ltimes \Omega$ to the closure of the copy of $P$ in $\Omega$; in symbols, $\Gg =
(G\ltimes \Omega)|_{\overline{P}}$. We write $\Gg|_{\partial\Omega}$ for the subgroupoid
\[
 \Gg|_{\partial\Omega} := \{(g,\omega) \in \Gg : \omega \in \partial\Omega\}.
\]

\subsection{The fibres of the Fell bundle}
\label{subsec:fell-fibres}

For a fixed $r\in P$ and any $p,q\in P$ there is a map
\[
 i_r : \Ll(X_p,X_q) \to \Ll(X_{pr},X_{qr})
\]
such that, for $x\in X_p$ and $y\in X_r$
\[
 i_r(S)(xy) = S(x)y.
\]
There is no notational dependence on $p$ and $q$, but this will not cause
confusion---indeed, it is helpful to think of $i_r$ as a map from $\bigoplus_{p, q \in P}
\Ll(X_p, X_q)$ to $\bigoplus_{p, q \in P} \Ll(X_{pr}, X_{qr})$.

For $\omega \in \Omega$ and $p \in \omega$, we define $[p,\omega) := \{q \in \omega : p
\le q\}$. Given any $(g,\omega) \in \Gg$, we have $[e \vee g^{-1}, \omega) = \{p \in P
\cap \omega : gp \in P\}$, and this set is directed (under the usual ordering on $P$). So
we can form the Banach-space direct limit
\[\textstyle
 \varinjlim_{p \in [e \vee g^{-1}, \omega)} \Ll(X_p,X_{gp})
\]
with respect to the maps $i_r : \Ll(X_p,X_{gp}) \to \Ll(X_{pr},X_{gpr})$ where $pr, gpr
\in \omega$. By definition of the direct limit, there are bounded linear maps
$\Ll(X_p,X_{gp}) \to \varinjlim \Ll(X_p,X_{gp})$, $p \in [e \vee g^{-1}, \omega)$, that
are compatible with the linking maps $i_r$. To lighten notation we regard all of these
maps as components of a single map $i_{(g,\omega)} : \bigoplus_p \Ll(X_p, X_{gp}) \to
\varinjlim \Ll(X_p,X_{gp})$. We define
\[\textstyle
 E_{(g,\omega)} := \clsp{\bigcup_{p \in [e \vee g^{-1}, \omega)} i_{(g,\omega)} (\Kk(X_p,X_{gp}))}
 \subset \varinjlim \Ll(X_p,X_{gp}).
\]

\begin{lemma}\label{lem:fibres}
Each $A_\omega := E_{(e,\omega)}$ is a $C^*$-algebra and each $E_{(g,\omega)}$ is an
$A_{g\omega}$--$A_\omega$ imprimitivity bimodule.
\end{lemma}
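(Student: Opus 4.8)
The plan is to present $E_{(g,\omega)}$ as an off-diagonal corner of a single direct-limit $C^*$-algebra, so that all the algebraic structure is inherited from an ambient $C^*$-algebra and only fullness needs real work. For $p\in[e\vee g\inv,\omega)$ set $Y_p:=X_{gp}\oplus X_p$; under the identification $Y_{pr}\cong Y_p\otimes_A X_{p\inv r}$ each connecting map $i_r$ acts on $\Ll(Y_p)$ as the grading-preserving $*$-homomorphism $S\mapsto S\otimes 1$. Hence $\varinjlim_p\Ll(Y_p)$ is a $C^*$-algebra with a $2\times2$ block structure, and I set $\mathcal K:=\clsp\bigcup_p i(\Kk(Y_p))$. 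Once $\mathcal K$ is shown to be a $*$-subalgebra, its four corners are, reading off the blocks, $A_{g\omega}$, $E_{(g,\omega)}$, $E_{(g,\omega)}^*$ and $A_\omega$; here one uses that $[e\vee g\inv,\omega)$ is cofinal in $P\cap\omega$ (given $q\in P\cap\omega$ and any fixed $p_0$ in the index set, $q\vee p_0\in\omega$ and $g(q\vee p_0)\ge gp_0\ge e$) and that $\{gp\}$ is cofinal in $P\cap g\omega$, so the corner limits really are $A_\omega$ and $A_{g\omega}$. The case $g=e$ already yields the first assertion of the lemma, using only directedness of $\omega$ and compact alignment to see that $i_{p\inv(p\vee q)}(\Kk(X_p))\,i_{q\inv(p\vee q)}(\Kk(X_q))\subset i(\Kk(X_{p\vee q}))$.

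The one nonformal step is that $\mathcal K$ is closed under multiplication, equivalently that the system $(Y_p)$ is compactly aligned. I would isolate this as a rectangular compact alignment lemma: if $p\vee q=:t<\infty$, $S\in\Kk(X_p,X_b)$ and $T\in\Kk(X_c,X_q)$, then $i_{p\inv t}(S)\,i_{q\inv t}(T)\in\Kk(X_{cq\inv t},X_{bp\inv t})$. To prove it, take approximate units $(e_\lambda)\subset\Kk(X_p)$ and $(f_\mu)\subset\Kk(X_q)$; then $S=\lim_\lambda Se_\lambda$ and $T=\lim_\mu f_\mu T$ in norm, so by multiplicativity of the maps $i_r$ the product equals $\lim_{\lambda,\mu}i_{p\inv t}(S)\big(i_{p\inv t}(e_\lambda)i_{q\inv t}(f_\mu)\big)i_{q\inv t}(T)$, and the bracketed middle lies in $\Kk(X_t)$ by compact alignment. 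Each approximant is thus a compact sandwiched between adjointables, and compactness survives the norm limit. Decomposing a rank-one operator on $Y_p$ into its four pure blocks $\theta_{u_i,v_j}$ and noting that in a block product the middle indices are forced to join either as $p\vee q=t$ or as $gp\vee gq=gt$ (both finite, since left translation is an order isomorphism), this lemma makes every block of $i(\Kk(Y_p))\,i(\Kk(Y_q))$ compact. The same lemma gives the containments $E^*E\subset A_\omega$, $EE^*\subset A_{g\omega}$, $A_{g\omega}E\subset E$ and $EA_\omega\subset E$.

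With $\mathcal K$ a $C^*$-algebra, the module structure is automatic: the two actions are multiplication in $\mathcal K$, the inner products are $\langle\xi,\eta\rangle_{A_\omega}=\xi^*\eta$ and ${}_{A_{g\omega}}\langle\xi,\eta\rangle=\xi\eta^*$, positivity and the $C^*$-norm identity hold because they are computed inside a $C^*$-algebra, and the imprimitivity identity ${}_{A_{g\omega}}\langle\xi,\eta\rangle\,\zeta=\xi\,\langle\eta,\zeta\rangle_{A_\omega}$ is just associativity $\xi\eta^*\zeta=\xi(\eta^*\zeta)$.

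The substantive obstacle is therefore fullness, i.e. $\clsp E^*E=A_\omega$ and $\clsp EE^*=A_{g\omega}$. At a fixed level one only obtains $\clsp\{S^*T\}=\clsp\{\theta_{\eta\langle\xi,\xi'\rangle,\eta'}\}$, so naive fullness would require $\langle X_{gp},X_{gp}\rangle$ dense in $A$, which fails for typical fibres. The fix uses that $\clsp E^*E$ is a closed two-sided ideal of $A_\omega$ (immediate from the four containments above) together with the direct-limit description of $A_\omega$: it suffices to prove $\clsp X_p\langle X_{gp},X_{gp}\rangle=X_p$ for cofinally many $p$. Writing $X_p\cong X_{e\vee g\inv}\otimes_A Z$ and $X_{gp}\cong X_{e\vee g}\otimes_A Z$ with the common tail $Z=X_{(e\vee g\inv)\inv p}=X_{(e\vee g)\inv gp}$, this reduces to showing that the range ideal $\langle X_{e\vee g},X_{e\vee g}\rangle$ acts nondegenerately on $Z$ for $p$ large. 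This is exactly where nondegeneracy of the product system enters, the point being that the connecting maps $i_r$ annihilate the directions on which the left actions degenerate, so that the relevant range ideals stabilise cofinally; I expect this cofinal stabilisation to be the hardest part of the argument. The symmetric computation gives left fullness, and the lemma follows.
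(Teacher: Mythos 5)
Your overall architecture is the same as the paper's. The paper also passes to the block algebras $\Ll(X_p \oplus X_{gp})$, forms a direct limit $L_{g,\omega}$ of closed spans of compacts, and reads off $A_\omega$, $A_{g\omega}$ and $E_{(g,\omega)}$ as the corners cut out by the images of $1_{X_p}$ and $1_{X_{gp}}$ with $p = e \vee g\inv$; your cofinality observations are the same mechanism. The one real difference is the middle step: to see that the closed span of the $i(\Kk(Y_p))$ is a $C^*$-algebra, the paper runs the Laca--Raeburn induction over finite $\vee$-closed subsets $F \subseteq [e\vee g\inv,\omega)$, showing each $B_F = \sum_{s \in F} i_{s\inv p_F}(\Kk(X_s \oplus X_{gs}))$ is a $C^*$-algebra by stripping off a minimal element and citing \cite[Corollary~1.8.4]{Dixmier}, then identifying the closed span with $\varinjlim_F B_F$; you instead prove a rectangular compact-alignment lemma and conclude directly that the span of the generators is a $*$-algebra. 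Your rectangular lemma is precisely the paper's Lemma~\ref{lemma:compactly_aligned}, which the paper states just afterwards in order to define the bundle multiplication; your approximate-unit proof is if anything cleaner, since $Se_\lambda \to S$ holds for compacts with no hypotheses, whereas the paper factors $S = S'U$, $T = VT'$ using nondegeneracy. Both routes are sound; the paper's $B_F$ presentation is re-used later (Lemma~\ref{lem:nuclear fibres}), which is a mild argument in its favour.

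The genuine gap is exactly where you stopped: fullness. Moreover, your proposed repair cannot be carried out, because the fullness assertion is false at this level of generality --- the ``cofinal stabilisation'' you hoped for need not exist, since the index set $[e \vee g\inv, \omega)$ can be a single point. Take $(G,P) = (\Z,\N)$, $A = C([0,1])$, and $X_n = C_0((0,1])$ for $n \ge 1$ with pointwise operations, so that multiplication induces the canonical isomorphisms $X_m \otimes_A X_n \cong X_{m+n}$; this product system is nondegenerate, compactly aligned, and has injective left actions. Take $\omega = \omega_e$ and $g = 1$. Then $e \vee g\inv = e$ and $[e,\omega_e) = \{e\}$, so $E_{(1,\omega_e)} = \Kk(X_e,X_1) \cong C_0((0,1])$ and $A_{\omega_e} = \Kk(X_e) \cong C([0,1])$, while $\clsp E_{(1,\omega_e)}^*\,E_{(1,\omega_e)} = \overline{\langle X_1, X_1\rangle} = C_0((0,1]) \ne A_{\omega_e}$. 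Your own sufficient condition $\clsp X_p \langle X_{gp}, X_{gp}\rangle = X_p$ fails here at the only available index $p = e$, correctly predicting the failure of fullness.

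You should know that the paper does not fill this gap either: its proof disposes of fullness with the single clause that, since $X$ is nondegenerate, the corners are ``complementary full corners''. Nondegeneracy does give $i_r(1_{X_p}) = 1_{X_{pr}}$, so $\tilde{\imath}_{g,\omega}(1_{X_p}) + \tilde{\imath}_{g,\omega}(1_{X_{gp}})$ is an identity for $L_{g,\omega}$ and the corners are complementary; it does not make them full, as the example shows. What is true, and what your argument up to that point completely establishes, is that $E_{(g,\omega)}$ is an $A_{g\omega}$--$A_\omega$ Hilbert bimodule, i.e.\ an imprimitivity bimodule between the ideals $\clsp E E^* \trianglelefteq A_{g\omega}$ and $\clsp E^* E \trianglelefteq A_\omega$. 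Since the Fell-bundle axioms do not require the bundle to be saturated, this weaker statement suffices for the construction of $\Ee$ and for everything downstream of Lemma~\ref{lem:fibres}; so the right move is to weaken the statement rather than to attempt the stabilisation argument, and your instinct that fullness was the one substantive obstacle was exactly correct.
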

\begin{proof}
By definition of the maps $i_r$, if $T \in \Ll(X_p, X_{p'})$ and $S \in \Ll(X_{p'},
X_{p''})$, then $i_r(T)i_r(S) = i_r(TS)$, and $i_r(T)^* = i_r(T^*)$. Using this, one
checks that, identifying each $\Ll(X_p \oplus X_{gp})$ with the algebra of block-operator
matrices $\Big(\begin{smallmatrix}\Ll(X_p) & \Ll(X_{gp}, X_p)\\ \Ll(X_p, X_{gp}) &
\Ll(X_{gp})\end{smallmatrix}\Big)$, the maps $i_r$ determine a homomorphism $i_r :
\Ll(X_p \oplus X_{gp}) \to \Ll(X_{pr} \oplus X_{gpr})$. In the same vein as above, we use
the notation $\tilde{\imath}_{g,\omega}$ for all of the homomorphisms $\Ll(X_p \oplus
X_{gp}) \to \varinjlim \Ll(X_p, X_{gp})$.

The following is adapted from the proof of \cite[Lemma~4.1]{LR}. Since $\omega$ is
directed, each finite subset $H \subseteq [e \vee g^{-1}, \omega)$ is contained in a
finite $F \subseteq [e \vee g^{-1}, \omega)$ which is closed under $\vee$, and each such
$F$ has a maximum element $p_F$. For each such $F$, let
$$
B_F := \sum_{s \in F}
i_{s^{-1}p_F}(\Kk(X_s \oplus \Kk(X_{gs})) \subseteq \Ll(X_{p_F} \oplus X_{gp_F}).
$$
If $F
\subseteq \omega$ is finite with more than one element and $\vee$-closed, and if $q \in
F$ is minimal, then $F' := F \setminus \{q\}$ is also $\vee$ closed, and $p_{F'} = p_F$.
We have $B_F = i_{q^{-1}p_F}(\Kk(X_q \oplus X_{gq})) + B_{F'}$. Nica covariance and
minimality of $q$ ensures that
\[
i_{q^{-1}p_F}(\Kk(X_q \oplus X_{gq}))i_{s^{-1}p_F}(\Kk(X_s \oplus X_{gs}))
    \subseteq i_{(q \vee s)^{-1}p_F}(\Kk(X_(q \vee s) \oplus X_{g(q \vee s)}))
    \subseteq B_{F'}
\]
So $B_{F'} B_F, B_F B_{F'} \subseteq B_{F'}$. Assuming as an inductive hypothesis that
$B_{F'}$ is a $C^*$-algebra, we deduce from \cite[Corollary~1.8.4]{Dixmier} that $B_F$ is
a $C^*$-algebra. Since each $B_{\{p\}} = \Kk(X_p \oplus X_{gp})$ is clearly a
$C^*$-algebra, we conclude by induction that each $B_F$ is a $C^*$-algebra. So
\[\textstyle
\clsp{\bigcup_{p \in [e \vee g^{-1}, \omega)} \tilde{\imath}_{g,\omega} (\Kk(X_p \oplus X_{gp}))}
    \subset \varinjlim \Ll(X_p \oplus X_{gp})
\]
is canonically isometrically isomorphic to $L_{g,\omega} := \varinjlim_F
\tilde{\imath}_{g,\omega}(B_F)$, so is a $C^*$-algebra. Put $p = e \vee g^{-1}$, so $p
\in \omega \cap P$ and $gp \in g\omega \cap P$. Since $X$ is nondegenerate, the spaces
$A_\omega$ and $A_{g\omega}$ appear as the complementary full corners
$\tilde{\imath}_{g,\omega}(1_{X_p}) L_{g,\omega} \tilde{\imath}_{g,\omega}(1_{X_p})$ and
$\tilde{\imath}_{g,\omega}(1_{X_{gp}}) L_{g,\omega}
\tilde{\imath}_{g,\omega}(1_{X_{gp}})$ of $L_{g,\omega}$, so they are $C^*$-algebras.
Furthermore, $E_{(g,\omega)} = \tilde{\imath}_{g,\omega}(1_{X_{gp}}) L_{g,\omega}
\tilde{\imath}_{g,\omega}(1_{X_{p}})$, and so it is an
$A_{g\omega}$--$A_{\omega}$-imprimitivity bimodule.
\end{proof}

\subsection{The operations on the Fell bundle}

Let
\[\textstyle
 \Ee := \bigcup_{(g,\omega) \in \Gg} E_{(g,\omega)}.
\]
Then $\Ee$ is a bundle over $\Gg$, with $\pi : \Ee \to \Gg$ defined by
$\pi(E_{(g,\omega)}) = \{(g,\omega)\}$.

\begin{lemma}\label{lemma:compactly_aligned}
Fix $p,p',q,q' \in P$ with $p\vee q' < \infty$ and let $r = p{}\inv(p\vee q')$, and $r' =
q'{}\inv(p\vee q')$. Then for any $S \in \Kk(X_{p},X_{p'})$ and $T \in \Kk(X_q,X_{q'})$
we have
\[
 i_r(S)i_{r'}(T) \in \Kk(X_{q r'}, X_{p' r}).
\]
\end{lemma}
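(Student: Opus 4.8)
The plan is to deduce this ``off-diagonal'' statement from the ``diagonal'' compact-alignment hypothesis by a sandwiching argument. First I would record the relevant degrees: since $pr = p\vee q'$ and $q'r' = p\vee q'$, the operator $i_r(S)$ lies in $\Ll(X_{p\vee q'},X_{p'r})$ and $i_{r'}(T)$ lies in $\Ll(X_{qr'},X_{p\vee q'})$, so $i_r(S)i_{r'}(T)$ is a priori an element of $\Ll(X_{qr'},X_{p'r})$; the content of the lemma is that it is in fact compact. The key point is that the middle degree at which the two operators meet is $p\vee q'$, and the pairing of the source degree $p$ of $S$ with the range degree $q'$ of $T$ is exactly the one governed by compact alignment, with $r = p\inv(p\vee q')$ and $r' = q'{}\inv(p\vee q')$.

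Next I would introduce approximate units to convert $S$ and $T$ into products involving diagonal compacts. Let $(E_\lambda)$ be an approximate unit for $\Kk(X_p)$ and $(F_\mu)$ an approximate unit for $\Kk(X_{q'})$. Since $\Kk(X_p)$ acts as an approximate identity on $X_p$ and $\Kk(X_{q'})$ on $X_{q'}$, a standard estimate gives $SE_\lambda \to S$ and $F_\mu T \to T$ in operator norm. Because $i_r$ and $i_{r'}$ are contractive and satisfy $i_r(S)i_r(E_\lambda) = i_r(SE_\lambda)$ and $i_{r'}(F_\mu)i_{r'}(T) = i_{r'}(F_\mu T)$ (immediate from the defining relation $i_r(S)(xy)=S(x)y$), we get $i_r(S) = \lim_\lambda i_r(S)i_r(E_\lambda)$ and $i_{r'}(T) = \lim_\mu i_{r'}(F_\mu)i_{r'}(T)$ in norm. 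Joint continuity of multiplication then yields
\[
 i_r(S)i_{r'}(T) = \lim_{\lambda,\mu} i_r(S)\,\big(i_r(E_\lambda)i_{r'}(F_\mu)\big)\,i_{r'}(T).
\]

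Finally I would apply the hypothesis and absorb. Applying compact alignment to $E_\lambda\in\Kk(X_p)$ and $F_\mu\in\Kk(X_{q'})$ gives $i_r(E_\lambda)i_{r'}(F_\mu)\in\Kk(X_{p\vee q'})$ for all $\lambda,\mu$. Using the standard ideal identities $\Kk(E,F)\Ll(D,E)\subseteq\Kk(D,F)$ and $\Ll(E,F)\Kk(D,E)\subseteq\Kk(D,F)$ (which follow from $\theta_{x,y}B = \theta_{x,B^*y}$ and $B\theta_{x,y}=\theta_{Bx,y}$), one absorbs $i_{r'}(T)$ into the central compact to land in $\Kk(X_{qr'},X_{p\vee q'})$, and then absorbs $i_r(S)$ to land in $\Kk(X_{qr'},X_{p'r})$; hence each approximating term $i_r(S)\big(i_r(E_\lambda)i_{r'}(F_\mu)\big)i_{r'}(T)$ lies in $\Kk(X_{qr'},X_{p'r})$. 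As this space is norm-closed, the limit $i_r(S)i_{r'}(T)$ lies in it as well. The only real obstacle is the gap between the diagonal hypothesis and the off-diagonal conclusion, and the approximate-unit sandwich is precisely what bridges it: it manufactures a genuinely diagonal compact $i_r(E_\lambda)i_{r'}(F_\mu)$ in the middle, whose compactness then propagates outward through the ideal property of the compacts.
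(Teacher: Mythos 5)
Your proof is correct and takes essentially the same approach as the paper: the paper reduces to operators of the form $SU$ and $VT$ with $U \in \Kk(X_p)$ and $V \in \Kk(X_{q'})$ (the factorization form of your approximate-unit sandwich), applies compact alignment to the inner diagonal product $i_r(U)i_{r'}(V) \in \Kk(X_{p\vee q'})$, and absorbs the outer factors via the ideal property of the compacts, exactly as you do. The only cosmetic difference is that you realise the reduction by taking norm limits $SE_\lambda \to S$, $F_\mu T \to T$ over approximate units, where the paper invokes the density of such products, citing nondegeneracy of the actions.
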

\begin{proof}
Since both the left and right actions are nondegenerate, it is enough to prove the result
for $SU$ and $VT$ where $S\in \Kk(X_{p,p'}), U\in \Kk(X_{p})$ and $T\in \Kk(X_q,X_{q'}),
V \in \Kk(X_{q'})$. We have
 \[
  i_r(SU) i_{r'}(VT) = i_r(S) i_r(U)i_{r'}(V) i_{r'}(T).
 \]
Since $X$ is compactly aligned, we have $i_r(U)i_{r'}(V) \in \Kk(X_{p\vee q'})$, and
hence $i_r(SU) i_{r'}(VT) \in \Kk(X_{q r'}, X_{p' r})$ as claimed.
\end{proof}

Fix $((g,h\omega),(h,\omega)) \in \Gg^{(2)}$, $hp\in [e\vee g\inv,h\omega), q\in
[e\vee h\inv, \omega)$ and $S\in \Kk(X_{hp},X_{ghp})$, $T\in \Kk(X_q,X_{hq})$. Let $r = p\inv
(p\vee q), r' = q\inv (p\vee q)$, and define
\[
 i_{(g,h\omega)}(S)  i_{(h,\omega)}(T) := i_{(gh,\omega)}\left( i_r(S)i_{r'}(T) \right).
\]
The right hand side makes sense by Lemma \ref{lemma:compactly_aligned}. This extends to a
multiplication
\[
 \Ee^{(2)} := \{(e,f) \in \Ee \times \Ee : (\pi(e),\pi(f)) \in \Gg^{(2)} \} \to \Ee.
\]
For $(g,\omega)\in \Gg$ and $p\in [e \vee g^{-1}, \omega)$, the usual adjoint operation
$* :\Ll(X_p,X_{gp}) \to \Ll(X_{gp},X_p) = \Ll(X_{gp},X_{g\inv (gp)})$ is isometric. So
for each $(g,\omega)$ it extends to an involution $\varinjlim \Ll(X_p,X_{gp}) \to
\varinjlim \Ll(X_{gp},X_p)$, which then restricts to an involution $E_{(g,\omega)} \to
E_{(g\inv,g\omega)}$.

\subsection{The topology on the Fell bundle}
Given $p,q\in P$ and $S\in\Ll(X_p,X_q)$ define $f^S : \Gg \to \bigcup_{(g,\omega) \in
\Gg} \varinjlim_{p \in [e \vee g^{-1}, \omega)} \Ll(X_p, X_{gp})$ by
\[
 f^S(g,\omega) = \left\{
 \begin{array}{ll}
 i_{(qp\inv,\omega)}(S) & \textrm{ if } g = qp\inv \textrm{ and } p\in\omega \\
 0 & \textrm{ otherwise.}
 \end{array}
 \right.
\]

\begin{lemma}\label{lem:usc}
For any $p, q \in P$ and any $S\in\Ll(X_p,X_q)$, the map
\[
 (g,\omega) \mapsto \|f^S(g,\omega)\|
\]
is upper semicontinuous.
\end{lemma}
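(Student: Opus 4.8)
The plan is to exploit the discreteness of $G$ to reduce to a statement about a single fibre, and then to express the fibrewise norm as an infimum that is manifestly upper semicontinuous in $\omega$. Since $G$ is discrete, a neighbourhood basis of a point $(g_0,\omega_0)\in\Gg$ is given by the sets $(\{g_0\}\times V)\cap\Gg$ with $V$ an open neighbourhood of $\omega_0$ in $\Omega$. So it suffices to fix $\varepsilon>0$ and produce such a $V$ on which $\|f^S(g_0,\cdot)\|<\|f^S(g_0,\omega_0)\|+\varepsilon$; on this neighbourhood every point has first coordinate $g_0$, so no other group elements need be considered.

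First I would dispose of the cases in which $f^S(g_0,\omega_0)=0$. If $g_0\neq qp\inv$ then $f^S$ vanishes identically on $\{g_0\}\times\Omega$, and there is nothing to prove. If $g_0=qp\inv$ but $p\notin\omega_0$, then $\omega_0$ lies in the basic open set $\Zz(\{p\},\varnothing)=\{\omega:p\notin\omega\}$, on which $f^S(g_0,\cdot)$ is identically zero. In either case upper semicontinuity at $(g_0,\omega_0)$ is immediate.

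The substantive case is $g_0=qp\inv$ and $p\in\omega_0$, and here the crucial input is a formula for the fibrewise norm. By Lemma~\ref{lem:fibres} the connecting maps $i_r$ of the direct system defining $E_{(g_0,\omega)}$ are components of $*$-homomorphisms of block-operator $C^*$-algebras, hence are contractive; since $i_{rr'}=i_{r'}\circ i_r$, the net $(\|i_r(S)\|)$ is non-increasing over the directed index set, and the norm of the image in the direct limit is its infimum:
\[
 \|i_{(g_0,\omega)}(S)\| = \inf\{\|i_r(S)\| : r\in P,\ pr\in\omega\}.
\]
Here $pr\in\omega$ already forces $qr=g_0pr\in P$, so $pr\in[e\vee g_0\inv,\omega)$ and $i_r(S)$ genuinely indexes a term of the system. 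Now choose $r_0\in P$ with $pr_0\in\omega_0$ such that $\|i_{r_0}(S)\|<\|f^S(g_0,\omega_0)\|+\varepsilon$, and set $V:=\Zz(\varnothing,\{pr_0\})=\{\omega:pr_0\in\omega\}$, an open neighbourhood of $\omega_0$. For any $\omega\in V$, hereditariness of $\omega$ together with $p\leq pr_0$ yields $p\in\omega$, so $(g_0,\omega)\in\Gg$ and $f^S(g_0,\omega)=i_{(g_0,\omega)}(S)$; moreover $pr_0\in\omega$ means the term $\|i_{r_0}(S)\|$ occurs in the infimum above, whence
\[
 \|f^S(g_0,\omega)\| = \|i_{(g_0,\omega)}(S)\| \leq \|i_{r_0}(S)\| < \|f^S(g_0,\omega_0)\|+\varepsilon.
\]
This establishes upper semicontinuity at $(g_0,\omega_0)$.

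I expect the main obstacle to be the infimum formula for the direct-limit norm in the substantive case: it requires identifying the connecting maps $i_r$ as contractions (via Lemma~\ref{lem:fibres}) and verifying that the index sets $[e\vee g_0\inv,\omega)$ vary with $\omega$ in a way compatible with hereditariness, so that $pr_0\in\omega$ both forces $p\in\omega$ and inserts the approximating term into the infimum. Once that formula is in hand, the topological argument through the compact-open sets $\Zz(\varnothing,\{pr_0\})$ is routine.
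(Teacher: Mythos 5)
Your proof is correct and is essentially the paper's argument: both hinge on exactly the same two ingredients, namely the infimum formula $\|i_{(g,\omega)}(S)\| = \inf_r \|i_r(S)\|$ for the norm in the Banach-space direct limit (valid since the connecting maps are contractive and the net of norms is non-increasing) and the cylinder sets $\Zz(\{p\},\varnothing)$ and $\Zz(\varnothing,\{pr_0\})$ together with hereditariness of $\omega$. The only divergence is the preliminary reduction---the paper first reduces to the unit space via $\|f^S(g,\omega)\| = \|f^{S^*S}(\omega)\|^{1/2}$ and then argues for $S \in \Ll(X_p)$, whereas you stay at a general point $(g_0,\omega_0)$ using discreteness of $G$ and contractivity of the off-diagonal connecting maps from Lemma~\ref{lem:fibres}---a mild variation, with your justification of the infimum formula in fact slightly more explicit than the paper's.
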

\begin{proof}
Since $\| f^S(g,\omega)\| = \|f^{S^*S}(\omega) \|^{1/2}$ for any $(g,\omega) \in \Gg$, it
is enough to check upper semicontinuity on the unit space $\Gg^{(0)} = \Omega$. Fix $p
\in P$,  $S \in \Ll(X_p)$ and $\alpha > 0$. We must show that the set
\[
 \{ \omega : \|f^S(\omega)\| < \alpha \}
\]
is open. Since $p\not\in\omega$ implies that $f^S(\omega) = 0$, we see that
\[
 \{ \omega : \|f^S(\omega)\| < \alpha \} = \Zz(\{p\},\varnothing) \cup \{ \omega : p\in\omega \text{ and } \|i_\omega(S)\| < \alpha \}
\]
and so it is enough to show that $\{ \omega : p\in\omega \text{ and } \|f^S(\omega)\| <
\alpha \}$ is open. Fix $\omega$ in this set. Since $A_\omega$ is a direct limit we have
\[
\|f^S(\omega)\| = \|i_\omega(S)\| = \lim_{q\geq p} \|i_{qp\inv}(S)\| = \inf_{q\geq p} \|i_{qp\inv}(S)\|.
\]
Therefore, there exists a $q\geq p$ such that $\|i_{qp\inv}(S)\| < \alpha$. Suppose that
$\omega' \in \Zz(\varnothing,\{q\})$. Then $p\in\omega'$, and so
\[
 \|f^S(\omega')\| = \|i_{\omega'}(S)\| \leq \|i_{qp\inv}(S)\| <\alpha.\qedhere
\]
\end{proof}

Now let
\[
\Gamma = \opsp\{ f^S :  p,q \in P,\ S\in \Kk(X_p,X_q) \}.
\]
Given finitely many pairs $(p_1, q_1), \dots, (p_n, q_n)$ and operators $S_i \in
\Kk(X_{p_i}, X_{q_i})$, there are finitely many maximal subsets $F_1, \dots, F_m$ of
$\{p_1, \dots, p_n\}$ such that each $F_j$ has an upper bound $r_j$ in $P$. Putting $T_j
:= \sum_{p \in F_j} i_{p^{-1}r_j}(S_i)$ for each $j$, we have $T_j \in \Ll(X_{r_j})$ and
\[\textstyle
\sum_{i=1}^n f^{S_i}
    = \sum^m_{j=1} f^{T_j},
\]
where the $f^{T_j}$ have mutually disjoint support. So Lemma~\ref{lem:usc} shows that the
sections in $\Gamma$ are upper semicontinuous.

Given $(g,\omega) \in \Gg$ we have
\begin{align*}
 \{ f(g,\omega) : f\in \Gamma \} &= \left\{ i_{(g,\omega)}(S) :\ p\in [e \vee g^{-1}, \omega),\ S \in \Kk(X_p,X_{gp}) \right\} \\
 &=\textstyle \bigcup_{[e \vee g^{-1}, \omega)} i_{(g,\omega)} (\Kk(X_p,X_{gp}))
\end{align*}
which densely spans $E_{(g,\omega)}$. Hence \cite[Section II.13.18]{FD1988} shows that
there is a unique topology on $\Ee$ such that $(\Ee,\pi)$ is a Banach bundle and all the
functions in $\Gamma$ are continuous cross sections of $\Ee$; and $\Ee$ becomes a
Fell-bundle over $\Gg$ in this topology.

\section{Representing the product system}

\subsection{Toeplitz representation}
Let $(G,P)$ be a quasi-lattice ordered group, and $X$ a nondegenerate compactly aligned
product system over $P$. For $p\in P$, identify $X_p$ with $\Kk(X_e, X_p)$ as usual: $x
\in X_p$ is identified with the operator $a \mapsto x\cdot a$. We then write $x^*$ for
the operator $y \mapsto \langle x,y\rangle_{X_e}$ in $\Kk(X_p, X_e)$. Define $\psi_p :
X_p \to C^*(\Gg,\Ee)$ by $\psi_p(x) = f^x$.

\begin{proposition}
\label{prop:nica_covariant_representation} Let $(G,P)$ be a quasi-lattice ordered group,
and $X$ a nondegenerate compactly aligned product system over $P$. Let $\Gg$ and $\Ee$ be
the groupoid and Fell bundle constructed in Section~\ref{sec:Fell bundle}. The map $\psi
: X \to C^*(\Gg,\Ee)$ such that $\psi|_{X_p} = \psi_p$ is a Nica covariant Toeplitz
representation of $X$, and for $S \in \Kk(X_p)$, we have $\psi^{(p)}(S) = f^S$.
\end{proposition}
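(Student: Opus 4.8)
The plan is to verify the representation axioms (T1)--(T3) and Nica covariance (N) directly, by evaluating the relevant convolutions in $\Gamma_c(\Gg;\Ee)$ and matching them against the sections $f^S$. The decisive simplification is that, viewing $x \in X_p$ as the operator $a \mapsto x\cdot a$ in $\Kk(X_e,X_p)$, each section $\psi_p(x) = f^x$ is supported on the single bisection $\{(p,\omega) : e \in \omega\}$. Consequently every convolution $f^S * f^T$ arising below has at most one nonzero summand, and evaluating it reduces to a single application of the Fell-bundle multiplication of Section~\ref{sec:Fell bundle}. Linearity of each $\psi_p$ (half of (T1)) is immediate from linearity of $S \mapsto f^S$.

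For (T2), fix $x \in X_p$ and $y \in X_q$ and compute $(f^x * f^y)(g,\omega)$. The factorisation $(g,\omega) = (g_1,g_2\omega)(g_2,\omega)$ contributes only when $g_1 = p$, $g_2 = q$ and both $e \in \omega$ and $e \in q\omega$; these two conditions coincide because $e \in \omega$ together with heredity forces $P\inv \subseteq \omega$, so $q\inv \in \omega$ and $e \in q\omega$, matching the support $\{(pq,\omega) : e \in \omega\}$ of $f^{xy}$. On this set the value is $i_{(p,q\omega)}(x)\,i_{(q,\omega)}(y)$; re-expressing the first factor at level $q$ as $i_{(p,q\omega)}(i_q(x))$ and applying the bundle product yields $i_{(pq,\omega)}(i_q(x)\,y)$, and since $i_q(x)(\zeta) = x\zeta$ by nondegeneracy we get $i_q(x)\,y = xy$, giving (T2). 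For (T3) the first step is to observe that the bundle involution sends $f^x$ to $f^{x^*}$, where $x^* \in \Kk(X_p,X_e)$ is the operator $\eta \mapsto \langle x,\eta\rangle$; the same single-term convolution then shows $(f^x)^* * f^y$ is supported on the unit space with value $i_{(e,\omega)}(x^*y) = i_{(e,\omega)}(\langle x,y\rangle)$, which is exactly $f^{\langle x,y\rangle}(e,\omega)$. The $p=q=e$ cases of (T2) and (T3) then supply the remaining part of (T1): that $\psi_e$ is a $*$-homomorphism.

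Next I would establish $\psi^{(p)}(S) = f^S$ for $S \in \Kk(X_p)$, since (N) is phrased through the $\psi^{(p)}$. By definition $\psi^{(p)}(\theta_{x,y}) = f^x * f^{y^*}$, and the single-term convolution lands on the unit space with value $i_{(e,\omega)}(x\,y^*) = i_{(e,\omega)}(\theta_{x,y}) = f^{\theta_{x,y}}(e,\omega)$; linearity and continuity (as $\Kk(X_p)$ is the closed span of the $\theta_{x,y}$) give the general formula. Finally, for (N) I compute $f^S * f^T$ with $S \in \Kk(X_p)$, $T \in \Kk(X_q)$: it is supported on $\{(e,\omega) : p,q \in \omega\}$ with value $i_{(e,\omega)}(S)\,i_{(e,\omega)}(T)$. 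Here directedness of $\omega$ enters decisively: $p,q \in \omega$ forces $p \vee q \in \omega$, in particular $p \vee q < \infty$, so I may push both factors to the common level $p\vee q$ in the direct-limit $C^*$-algebra $A_\omega$, where the product becomes $i_{(e,\omega)}\!\left(i_{p\inv(p\vee q)}(S)\,i_{q\inv(p\vee q)}(T)\right)$. Lemma~\ref{lemma:compactly_aligned} guarantees the bracketed operator lies in $\Kk(X_{p\vee q})$, so this equals $f^{R}(e,\omega)$ with $R := i_{p\inv(p\vee q)}(S)\,i_{q\inv(p\vee q)}(T)$, and heredity makes the supports $\{p,q \in \omega\}$ and $\{p\vee q \in \omega\}$ agree. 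When $p \vee q = \infty$, directedness shows no $\omega$ can contain both $p$ and $q$, so $f^S * f^T = 0$. This is precisely (N).

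The main obstacle I anticipate is the bookkeeping of direct-limit representatives inside the Fell-bundle multiplication. That multiplication is written for operators indexed at specific levels, whereas the sections $f^x$ naturally present their operators at level $e$; each product therefore requires re-expressing an element at a compatible level and invoking both the compatibility of the direct-limit maps and the left-translation invariance of $\vee$ (that is, $h(p\vee q) = hp \vee hq$). Making the support conditions match — which rests on the elementary but essential facts that $e \in \omega$ implies $P\inv \subseteq \omega$ and that directedness promotes $p,q \in \omega$ to $p\vee q \in \omega$ — is where the quasi-lattice structure does the real work.
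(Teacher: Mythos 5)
Your proposal is correct and follows essentially the same route as the paper's proof: direct evaluation of the convolutions (each a single term thanks to the bisection supports of the $f^S$), establishing $\psi^{(p)}(\theta_{x,y}) = f^{\theta_{x,y}}$ and extending by linearity and continuity, and deriving (N) by pushing $S$ and $T$ to the common level $p \vee q$ inside $A_\omega$ using directedness of $\omega$ and compact alignment. If anything, you are slightly more thorough than the paper --- you verify (T2) for general $x \in X_p$, $y \in X_q$ and make the direct-limit level bookkeeping (the re-expression $i_{(p,q\omega)}(x) = i_{(p,q\omega)}(i_q(x))$ via nondegeneracy) explicit, where the paper only records the module-action cases --- but the method is the same.
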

\begin{proof}
We need to check the conditions of Definition~\ref{def:nica-cov}. For $x,y \in X_p$ and
$a\in X_e$,
\begin{align*}
\psi_p(x)^*\psi_p(y)(g,\omega)
    &= [{(f^x}^*) * f^y](g,\omega)
     = \sum_{h\omega \cap P \neq \varnothing} f^x((gh\inv,h\omega)\inv)^* f^y(h,\omega)\\
    &= \sum_{h\omega \cap P \neq \varnothing} f^x(hg\inv,g\omega)^* f^y(h,\omega)
     = \delta_{g,e} f^x(p,\omega)^* f^y(p,\omega) \\
    &= \delta_{g,e} i_{(p,\omega)}(x)^* i_{(p,\omega)}(y)
     = \delta_{g,e} i_{(p\inv,p\omega)}(x^*) i_{(p,\omega)}(y)\\
    &= \delta_{g,e} i_{\omega}(\langle x,y\rangle_A)
     = f^{\langle x,y\rangle_A} (g,\omega)
     = \psi_e(\langle x,y \rangle).
\end{align*}
Likewise,
\begin{align*}
[\psi_e(a) \psi_p(x)](g,\omega)
    &= [f^a * f^x](g,\omega)
     = \sum_{h\omega \cap P \neq \varnothing} f^a(gh\inv,h\omega) f^x(h,\omega) \\
    &= \delta_{g,p} i_{p\omega}(a) i_{(p,\omega)}(x)
     = \delta_{g,p} i_{(p,\omega)}(ax)
     = f^{ax}(g,\omega)
     = \psi_p(ax)
\end{align*}
and
\begin{align*}
[\psi_p(x)\psi_e(a)](g,\omega) &= [f^x * f^a](g,\omega)
     = \sum_{h\omega \cap P \neq \varnothing} f^x(gh\inv,h\omega) f^a(h,\omega) \\
    &= \delta_{g,p} i_{(p,\omega)}(x)i_{\omega}(a)
     = \delta_{g,p} i_{(p,\omega)}(xa)
     = f^{xa}(g,\omega)
     = \psi_p(xa).
\end{align*}
To see that each $\psi^{(p)}(S) = f^S$, consider $S = \theta_{x,y}$ and calculate:
\begin{align*}
\psi^{(p)}(\theta_{x,y})(g,\omega) &= [\psi_p(x)\psi_p(y)^*](g,\omega)
     = [f^x * f^y](g,\omega)
     = \sum_{h\omega \cap P \neq \varnothing} f^x(gh\inv,h\omega) f^y((h,\omega)\inv)^*\\
    &= \sum_{h\omega \cap P \neq \varnothing} f^x(gh\inv,h\omega) f^y(h\inv,h\omega)^*
     = \delta_{g,p} i_{(p,p\inv\omega)}(x) i_{(p,p\inv\omega)}(y)^*\\
    &= \delta_{g,p} i_{(p,p\inv\omega)}(x) i_{(p\inv,\omega)}(y^*)
     = \delta_{g,p} i_\omega(\theta_{x,y})
     = f^{\theta_{x,y}}(g,\omega).
\end{align*}
So continuity and linearity give $\psi^{(p)}(S) = f^S$ for all $S \in \Kk(X_p)$. Fix $p,q
\in P$ with $p\vee q<\infty$ and $S\in\Kk(X_p), T\in\Kk(X_q)$. Then
\begin{align*}
[\psi^{(p)}(S) \psi^{(q)}(T)](g,\omega) &= [f^S * f^T](g,\omega)
     = \sum_{h\omega \cap P \neq \varnothing} f^S(gh\inv,h\omega) f^T(h,\omega) \\
    &= \delta_{g,e} i_\omega(S)i_\omega(T)
     = \delta_{g,e} i_\omega(i_{p\inv(p\vee q)}(S)i_{q\inv(p\vee q)}(T))\\
    &= f^{i_{p\inv(p\vee q)}(S)i_{q\inv(p\vee q)}(T)}(g,\omega)
     = [\psi^{(p\vee q)}(i_{p\inv(p\vee q)}(S)i_{q\inv(p\vee q)}(T))](g,\omega).
\end{align*}
Thus all the conditions of Definition \ref{def:nica-cov} are satisfied.
\end{proof}

\subsection{Restriction of the representation to the boundary groupoid}

Consider $\pi_p : X_p \to C^*(\Gg|_{\partial\Omega}, \Ee)$ satisfying
\[
 \pi_p(x) = f^x|_{\Gg|_{\partial\Omega}}
\]
Define $\pi : X \to C^*(\Gg|_{\partial\Omega}, \Ee)$ by $\pi|_{X_p} = \pi_p$.

\begin{proposition} \label{prop:cp_covariant_representation}
Let $(G,P)$ be a quasi-lattice ordered group, and $X$ a nondegenerate compactly aligned
product system over $P$. Suppose that the homomorphisms $\phi_p : A
\to \Ll(X_p)$ implementing the left actions are all injective.
Let $\Gg$ and $\Ee$ be the groupoid and Fell bundle constructed
in Section~\ref{sec:Fell bundle}. The map $\pi : X \to C^*(\Gg|_{\partial\Omega}, \Ee)$
is a Cuntz--Nica--Pimsner covariant Toeplitz representation.
\end{proposition}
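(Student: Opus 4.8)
The plan is to realise $\pi$ as a composite so that the Toeplitz and Nica-covariance conditions come for free, isolating the real content in the relation (CNP). Since $\partial\Omega$ is closed and $G$-invariant, $\Gg|_{\partial\Omega}$ is a closed subgroupoid, and restriction of compactly supported sections induces a $*$-homomorphism $\operatorname{res}\colon C^*(\Gg,\Ee)\to C^*(\Gg|_{\partial\Omega},\Ee)$. By definition $\pi_p(x)=f^x|_{\Gg|_{\partial\Omega}}=\operatorname{res}(\psi_p(x))$, so $\pi=\operatorname{res}\circ\psi$ for the representation $\psi$ of Proposition~\ref{prop:nica_covariant_representation}. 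Conditions (T1)--(T3) and (N) involve only linear and $*$-algebraic relations among the $\psi_p(x)$ and $\psi^{(p)}(S)$, and these are preserved by the $*$-homomorphism $\operatorname{res}$; hence $\pi$ is automatically a Nica covariant Toeplitz representation, with $\pi^{(p)}(S)=\operatorname{res}(f^S)=f^S|_{\Gg|_{\partial\Omega}}$. Only (CNP) then remains.

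So fix a finite $F\subseteq P$ and operators $T_p\in\Kk(X_p)$ with $\sum_{p\in F}i_{p\inv q}(T_p)=0$ for large $q$. The element $b:=\sum_{p\in F}f^{T_p}\in\Gamma_c(\Gg;\Ee)$ is supported on the unit space $\Omega$, with fibre values $b(\omega)=\sum_{p\in F\cap\omega}i_{(e,\omega)}(T_p)\in A_\omega$, and $\sum_{p\in F}\pi^{(p)}(T_p)=\operatorname{res}(b)=b|_{\partial\Omega}$. A section supported on the unit space lies in the $C^*$-subalgebra $C^*(\partial\Omega,\Ee)\subseteq C^*(\Gg|_{\partial\Omega},\Ee)$, whose norm is the supremum of the fibre norms; so it suffices to prove that $b(\omega)=0$ for every $\omega\in\partial\Omega$. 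To evaluate $b(\omega)$ I use that $A_\omega=E_{(e,\omega)}$ is the direct limit along the directed set $[e,\omega)=\omega\cap P$. For $s\in\omega\cap P$ dominating every element of $F\cap\omega$, heredity of $\omega$ gives $i_{p\inv s}(T_p)=0$ whenever $p\in F\setminus\omega$ (since then $p\not\le s$), so the image of $b(\omega)$ at level $s$ is exactly $\sum_{p\in F}i_{p\inv s}(T_p)$ and $\|b(\omega)\|=\inf_s\|\sum_{p\in F}i_{p\inv s}(T_p)\|$. Here injectivity is decisive: since each $\phi_p$ is injective, every linking map $i_r$ is injective, hence isometric, so $s\mapsto\|\sum_{p\in F}i_{p\inv s}(T_p)\|$ is eventually constant along $\omega\cap P$, and $b(\omega)=0$ if and only if $\sum_{p\in F}i_{p\inv s}(T_p)=0$ for some $s\in\omega\cap P$ with $s\ge F\cap\omega$. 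The problem thus becomes combinatorial: for each $\omega\in\partial\Omega$, the directed set $\omega\cap P$ must meet the upward-closed set $Y:=\{s\in P:\sum_{p\in F}i_{p\inv s}(T_p)=0\}$.

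This last step is the heart of the matter, and it is the reason the boundary is needed: for $\omega=\omega_q=\{g:g\le q\}$ one has $\omega\cap P=[e,q]$ bounded, and $Y$ may miss it entirely, which is precisely why (CNP) need not hold in $C^*(\Gg,\Ee)$ but should survive restriction to $\partial\Omega$. I would establish the claim in two stages. First, using again that the maps $i_r$ are isometric, I would argue that the hypothesis ``$\sum_{p\in F}i_{p\inv q}(T_p)=0$ for large $q$'' forces $Y$ to contain $\bigcup_{f\in\mathcal F}\{s:s\ge f\}$ for some finite $\mathcal F\subseteq P$ that is a \emph{foundation set}, meaning every $p\in P$ satisfies $p\vee f<\infty$ for some $f\in\mathcal F$. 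Second, I would invoke the defining property of the boundary (cf.\ \cite{CL}): every $\omega\in\partial\Omega$ meets $\bigcup_{f\in\mathcal F}\{s:s\ge f\}$ for each foundation set $\mathcal F$. Combining the two yields $\omega\cap P\cap Y\neq\varnothing$ and hence $b(\omega)=0$.

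I expect the genuine obstacle to be the extraction of a finite foundation set from the ``for large $q$'' hypothesis, where the finiteness of $F$, compact alignment, and injectivity must be used in concert to control how the vanishing of $\sum_{p\in F}i_{p\inv s}(T_p)$ propagates across the incompatible ``directions'' of $P$. A secondary subtlety, which rules out the easiest argument, is that $\omega\mapsto\|b(\omega)\|$ is only upper semicontinuous, so one cannot verify $b=0$ merely on the dense subset $\Omax$ and pass to the closure $\partial\Omega=\overline{\Omax}$; the vanishing must be secured at every point of $\partial\Omega$ directly, which is exactly what the foundation-set characterisation of the boundary provides.
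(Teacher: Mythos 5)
Your reduction of the problem is sound as far as it goes: $\pi=\operatorname{res}\circ\psi$ does dispose of (T1)--(T3) and (N), the element $b=\sum_{p\in F}f^{T_p}$ is supported on the unit space, and under the injectivity hypothesis the linking maps $i_r$ are injective $*$-homomorphisms, hence isometric, so $b(\omega)=0$ if and only if the level-$s$ representative $\sum_{p\in F\cap\omega}i_{p\inv s}(T_p)$ vanishes for some (equivalently any) $s\in\omega$ with $s\geq\bigvee(F\cap\omega)$. Your stage (B) is also correct: for maximal $\omega$, a pigeonhole on the finite foundation set $\mathcal{F}$ together with Lemma~\ref{lemma:maximal_sup} gives $\mathcal{F}\cap\omega\neq\varnothing$, and since $\{\omega:\mathcal{F}\cap\omega\neq\varnothing\}=\bigcup_{f\in\mathcal{F}}\Zz(\varnothing,\{f\})$ is clopen, this passes from $\Omax$ to $\partial\Omega=\overline{\Omax}$. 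The genuine gap is stage (A), which you explicitly leave as ``I would argue'': it is not a loose end but the entire content of the proposition. The hypothesis ``for large $q$'' gives, for each single $q$, one $r\geq q$ with $[r,\infty)$ inside the set $Z:=\{r:\sum_{p\in F}i_{p\inv s}(T_p)=0\text{ for all }s\geq r\}$, but provides no finiteness across the incompatible directions of $P$; indeed a compactness argument (the sets $\Zz(\varnothing,\{r\})$, $r\in Z$, are clopen in the compact space $\partial\Omega$, and a finite subfamily of $Z$ meeting every boundary point is automatically a foundation set by directedness of maximal extensions) shows your claim (A) is essentially \emph{equivalent} to the assertion that every $\omega\in\partial\Omega$ contains some $r$ above which no ``bad'' sum is realisable. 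The only visible proof of that assertion is the paper's own mechanism, which your proposal never touches: fix $\omega\in\Omax$, choose (using countability) an increasing cofinal sequence $(r_n)$ in $\omega$, pick witnesses $q_n\geq r_n$ with $\sum_{p\in F}i_{p\inv q_n}(T_p)=0$ and boundary points $\omega_n\ni q_n$, and use Lemma~\ref{lemma:maximal_sup} to prove $\omega_n\to\omega$; this is exactly what kills the obstruction you correctly identify, namely the extra terms $p\in F\setminus\omega$ with $p\leq q_n$, since eventually $F\cap\omega_n=F\cap\omega$. So your plan defers, rather than solves, the real difficulty.

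Two further corrections. First, your ``secondary subtlety'' is mistaken, and it misled you into the foundation-set detour: under the standing injectivity hypothesis the function $\omega\mapsto\|b(\omega)\|$ is not merely upper semicontinuous but \emph{locally constant}, because on the clopen set $\Zz(F\setminus F_0,F_0)$ where $F\cap\omega=F_0$ your own isometry observation gives $\|b(\omega)\|=\big\|\sum_{p\in F_0}i_{p\inv r}(T_p)\big\|$ with $r=\bigvee F_0$, independent of $\omega$. Hence vanishing on the dense subset $\Omax$ \emph{does} propagate to all of $\partial\Omega$, and this density step is precisely how the paper concludes (via continuity of the section $\sum_p\pi^{(p)}(T_p)$); combined with the cofinal-sequence argument for maximal $\omega$ it yields the proposition without any mention of foundation sets. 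Second, your set $Y$ is not upward closed as asserted: if $s\leq t$ and some $p\in F$ satisfies $p\nleq s$ but $p\leq t$, then $\sum_{p\in F}i_{p\inv t}(T_p)$ acquires terms not obtained by applying $i_{s\inv t}$ to the level-$s$ sum, so $s\in Y$ does not imply $t\in Y$ (this is why $Z$, not $Y$, is the relevant up-closed set). As used--—via up-sets $[f,\infty)$ wholly contained in $Y$—--the slip is reparable, but it signals the same conflation that makes (A) look more innocent than it is.
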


Before we prove this, we need two lemmas.

\begin{lemma}
\label{lemma:maximal_sup} Suppose that $\omega\in\partial\Omega$ and $q\in P$ satisfy
$q\vee p < \infty$ for all $p\in\omega$. Then $q\in\omega$.
\end{lemma}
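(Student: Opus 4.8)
The plan is to argue by contradiction, assuming $q\notin\omega$ while $q\vee p<\infty$ for every $p\in\omega$, and to treat separately the case where $\omega$ is maximal and the general case $\omega\in\overline{\Omax}$. Throughout I may assume $\omega\neq\varnothing$, so that I can fix some $p\in\omega$. The substantive content is the maximal case, where the idea is to build a directed hereditary set strictly larger than $\omega$. Concretely, set
\[
 \rho:=\{g\in G: g\leq q\vee p\text{ for some }p\in\omega\},
\]
which is well defined precisely because $q\vee p<\infty$ for all $p\in\omega$. I would verify: $\rho$ is hereditary (immediate from transitivity of $\leq$); $\omega\subseteq\rho$ and $q\in\rho$ (since $p\leq q\vee p$ for $p\in\omega$, and $q\leq q\vee p$ for any fixed $p\in\omega$); and $\rho$ is directed, using that $a\leq b$ forces $q\vee a\leq q\vee b$: if $g_i\leq q\vee p_i$ with $p_i\in\omega$, then $p:=p_1\vee p_2\in\omega$ by directedness, so $g_1,g_2\leq q\vee p<\infty$, whence $g_1\vee g_2$ exists, is bounded by $q\vee p$, and lies in $\rho$. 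Thus $\rho\in\Omega$ with $\omega\subseteq\rho$, so maximality gives $\rho=\omega$ and hence $q\in\omega$, a contradiction. This settles the statement for $\omega\in\Omax$.

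For the passage to the closure I would fix a net $(\mu_\lambda)$ in $\Omax$ with $\mu_\lambda\to\omega$. Since $q\notin\omega$, pointwise convergence in $\{0,1\}^G$ gives $q\notin\mu_\lambda$ for large $\lambda$, and the maximal case, in contrapositive form, then furnishes for each such $\lambda$ a \emph{witness} $p_\lambda\in\mu_\lambda$ with $q\vee p_\lambda=\infty$. The key observation is that if some fixed element $p^\ast$ occurs as $p_\lambda$ for cofinally many $\lambda$, then $p^\ast\in\mu_\lambda$ cofinally, so $p^\ast\in\omega$ by convergence, and then $q\vee p^\ast=\infty$ with $p^\ast\in\omega$ contradicts the hypothesis. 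So the whole matter reduces to arranging that the witnesses $p_\lambda$ can be chosen from a fixed finite subset of $G$. One can already improve them somewhat: given a finite $F\subseteq\omega$ with join $p_F:=\bigvee F\in\omega$, for large $\lambda$ we have $p_F\in\mu_\lambda$, and then $p_F\vee p_\lambda\in\mu_\lambda$ is again a witness (if $q\vee(p_F\vee p_\lambda)$ were finite it would bound $q$ and $p_\lambda$, contradicting $q\vee p_\lambda=\infty$); thus the witnesses may be taken above any prescribed $p_F$.

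The main obstacle is exactly this control of the witnesses, and it is genuinely structural rather than a routine estimate. Writing $C_q:=\{\nu\in\Omega: q\vee p<\infty\text{ for all }p\in\nu\}$, one checks that its complement is the union of the basic open sets $\Zz(\varnothing,\{p\})$ over those $p$ with $q\vee p=\infty$, so $C_q$ is \emph{closed} while the conclusion $\{\nu:q\in\nu\}$ is clopen; consequently the set on which the implication of the lemma holds is merely \emph{open}, and the fact that it contains $\Omax$ does \emph{not} by itself force it to contain $\overline{\Omax}$. Since $G$ is discrete, to pass the witnesses to the limit I need them to lie in a \emph{fixed finite} (hence compact) subset of $G$; compactness of $\Omega$ alone does not supply this, as the witnesses may escape every finite set. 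I would therefore close the gap either by choosing $p_\lambda$ minimal among the elements of $\mu_\lambda$ incompatible with $q$ (localising the witness when the order has suitable finiteness), or, in full generality, by invoking the explicit description of the boundary $\partial\Omega$ due to Crisp and Laca in \cite{CL}, which characterises its points by precisely this compatibility property and thereby delivers $q\in\omega$ directly.
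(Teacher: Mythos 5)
Your first paragraph reproduces the paper's proof exactly: the paper forms $q\vee\omega=\{q\vee p : p\in\omega\}$, verifies it is directed, passes to the hereditary closure $\Her(q\vee\omega)$ (your $\rho$), and concludes from $\omega\subseteq\Her(q\vee\omega)$ that the two sets are equal. But the paper draws this last conclusion ``because $\omega\in\partial\Omega$'', with no limiting argument whatsoever; read literally, it treats every point of $\partial\Omega=\overline{\Omax}$ as maximal. So the obstruction you isolate in your second and third paragraphs is genuine, and the paper does not address it either. Note, however, that in its one application (the proof of Proposition~\ref{prop:cp_covariant_representation}) the lemma is only ever invoked at points $\omega\in\Omax$, where your argument and the paper's agree and are complete.

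Your remaining gap cannot be closed, because the statement over all of $\partial\Omega$ is false in general: the property in the lemma actually characterises maximality. Indeed, if $e\in\omega\subsetneq\rho\in\Omega$, choose $h\in\rho\setminus\omega$ and put $q:=h\vee e$, which is finite since $\rho$ is directed; then $q\in P$, $q\notin\omega$ by heredity, and $q\vee p\in\rho$ is finite for every $p\in\omega$, so the implication of the lemma fails at $\omega$. Thus the lemma for all $\omega\in\partial\Omega$ containing $e$ would say precisely that such points are maximal, i.e.\ that $\Omax$ is closed there --- and your witnesses really can escape every finite set. For example, in the free group on generators $x_1,x_2,\dots$ with $P$ the free monoid, the maximal sets $\mu_n:=\{g\in G : g\le x_n^k \text{ for some } k\}$ converge to $\omega_e=\{g : g\le e\}$ (each fixed $g\notin\omega_e$ lies in at most one $\mu_n$, since its reduced form begins with a specific generator, while $\omega_e=P\inv\subseteq\mu_n$ for all $n$); so $\omega_e\in\partial\Omega$, yet it fails the conclusion with $q=x_1$, because $x_1$ is an upper bound of every $p\le e$ but $x_1\notin\omega_e$. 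Here the only witnesses in $\mu_n$ are words involving $x_n$, exactly the escape you feared. Consequently your fallback (b) is not available: \cite{CL} cannot characterise $\partial\Omega$ by this compatibility property in general, as that would force $\partial\Omega=\Omax$; their description concerns finitely generated Artin groups, where (in the spirit of your ``minimal witness'' remark) one can check, e.g.\ for finitely generated free groups, that the boundary points containing $e$ are indeed maximal. The correct repair for the lemma as used is simply to weaken its hypothesis from $\omega\in\partial\Omega$ to $\omega\in\Omax$, which is all that the paper's argument requires.
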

\begin{proof}
Consider the set
\[
 q\vee \omega := \{ q\vee p : p\in\omega \}
\]
If $q\vee p_1, q\vee p_2 \in q\vee\omega$ we have
\[
 (q\vee p_1) \vee (q\vee p_2) = q\vee (p_1\vee p_2) \in q\vee \omega
\]
since $p_1\vee p_2 \in \omega$. So $q\vee\omega$ is directed. Let $\Her(q\vee\omega)$
denote the hereditary closure $\Her(q\vee\omega) = \{g \in G : g \le p \text{ for some }
p \in q \vee \omega\}$ of $q \vee \omega$. Notice that $q = q\vee e
\in\Her(q\vee\omega)$. For any $p\in\omega$,
\[
 p \leq q\vee p \in q\vee\omega
\]
and hence $p\in\Her(q\vee\omega)$. So $\omega \subset \Her(q\vee\omega)$ and hence
$\omega = \Her(q\vee\omega)$ because $\omega\in\partial\Omega$. So $q\in\omega$.
\end{proof}

\begin{lemma}\label{lemma:limit_omega}
Fix a sequence $(\omega_n)_{n=1}^\infty \subset \Omega$ with $p\in\omega_n$ for all $n$,
and suppose that $\omega_n \to \omega$. Then $p\in\omega$, and for $T \in \Kk(X_p)$,
\[
    i_{\omega_n}(T) \to i_\omega(T) \text{ in $E$ as $n\to \infty$.}
\]
\end{lemma}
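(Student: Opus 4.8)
The plan is to recognise the direct-limit elements $i_{\omega_n}(T)$ and $i_\omega(T)$ as the values at $(e,\omega_n)$ and $(e,\omega)$ of a single globally defined continuous section of $\Ee$, so that the asserted convergence becomes a direct consequence of continuity of that section in the bundle topology.

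First I would dispose of the claim $p\in\omega$. Since $p\in\omega_n$ for every $n$, the characteristic functions satisfy $\chi_{\omega_n}(p)=1$, and convergence $\omega_n\to\omega$ in $\Omega\subset\{0,1\}^G$ is just pointwise convergence of characteristic functions; hence $\chi_\omega(p)=\lim_n\chi_{\omega_n}(p)=1$, that is $p\in\omega$. In particular $p\in[e,\omega)=\omega\cap P$, so $i_\omega(T)=i_{(e,\omega)}(T)$ is defined.

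Next I would invoke the section $f^T\in\Gamma$ attached to $T\in\Kk(X_p)=\Kk(X_p,X_p)$. Because the source and target fibres coincide, the defining formula specialises to $f^T(e,\omega')=i_{(e,\omega')}(T)=i_{\omega'}(T)$ whenever $p\in\omega'$, and to $0$ otherwise. Thus $f^T(e,\omega_n)=i_{\omega_n}(T)$ for all $n$ (as $p\in\omega_n$) and $f^T(e,\omega)=i_\omega(T)$ (as $p\in\omega$). Since $G$ carries the discrete topology and $\Gg$ inherits the subspace topology from $G\ltimes\Omega$, the hypothesis $\omega_n\to\omega$ gives $(e,\omega_n)\to(e,\omega)$ in $\Gg$; each of these points lies in $\Gg$ because $p\in\omega_n\cap P$ and $p\in\omega\cap P$. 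By the construction of the topology on $\Ee$ via \cite[Section~II.13.18]{FD1988}, every element of $\Gamma$, and in particular $f^T$, is a continuous cross section, so continuity of $f^T$ at $(e,\omega)$ yields
\[
 i_{\omega_n}(T) = f^T(e,\omega_n) \longrightarrow f^T(e,\omega) = i_\omega(T) \qquad\text{in } \Ee.
\]

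The only real obstacle here is conceptual rather than computational: one must notice that the fibrewise elements $i_{\omega_n}(T)$, which a priori live in the distinct $C^*$-algebras $A_{\omega_n}$, are uniformly realised as the values of the single continuous section $f^T$. Once this identification is made, the bundle topology does all the work, and no estimate beyond continuity of $f^T$ is needed; indeed, the standard characterisation of convergence in an upper semicontinuous Banach bundle (a net $e_\lambda\to e$ iff $\pi(e_\lambda)\to\pi(e)$ and $\|e_\lambda-\sigma(\pi(e_\lambda))\|\to 0$ for a continuous section $\sigma$ through $e$) is satisfied trivially with $\sigma=f^T$, since then $\|i_{\omega_n}(T)-f^T(e,\omega_n)\|=0$.
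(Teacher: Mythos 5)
Your proof is correct and takes essentially the same route as the paper's: you obtain $p\in\omega$ from the topology on $\Omega$ (pointwise convergence of characteristic functions, which is equivalent to the paper's appeal to closedness of $\Zz(\varnothing,\{p\})$), and you deduce $i_{\omega_n}(T)\to i_\omega(T)$ from continuity of the section $f^T$ at $(e,\omega)$ together with $(e,\omega_n)\to(e,\omega)$ in $\Gg$, which is exactly the paper's argument phrased without the explicit open-preimage bookkeeping. The closing remark about the standard convergence criterion in an upper semicontinuous Banach bundle is harmless but not needed.
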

\begin{proof}
We know that the set $\Zz(\varnothing,\{p\})$ is closed and $\omega_n\in
\Zz(\varnothing,\{p\})$ for all $n$. Hence $\omega\in \Zz(\varnothing,\{p\})$ and so
$p\in\omega$.

Now, fix $T\in\Kk(X_p)$ and $U\subset \Ee$ open with $i_\omega(T) \in U$. By definition
of the topology on $\Ee$, the function $f^T$ is continuous, so $(f^T)\inv(U) \subset \Gg$
is open. Since $\omega_n \to \omega$ and $\Gg$ has the relative product topology,
$(e,\omega_n) \to (e,\omega)$ in $\Gg$. We have $f^T(e,\omega) = i_\omega(T) \in U$, and
hence $(e,\omega)\in (f^T)\inv(U)$. Thus there exists $N$ such that $(e,\omega_n) \in
(f^T)\inv(U)$ for all $n > N$, and so
\[
    f^T(e,\omega_n) = i_{\omega_n}(T) \in U\text{ for all $n > N$},
\]
giving $i_{\omega_n}(T) \to i_\omega(T)$.
\end{proof}

\begin{proof}[Proof of Proposition \ref{prop:cp_covariant_representation}]
Replacing an $\omega\in\Omega$ with $\omega\in\partial\Omega$ in the proof of Proposition
\ref{prop:nica_covariant_representation} shows that $\pi$ is a Nica covariant Toeplitz
representation. Since all the left actions are by injective homomorphisms, the
representation $\pi$ is Cuntz-Nica-Pimsner covariant if it satisfies relation~(CNP) of
Definition~\ref{def:c-n-p-cov}.

Fix a finite set $F\subset P$ and elements $T_p \in \Kk(X_p)$, $p\in F$ such that
\[\textstyle
 \sum_{p\in F} i_{qp\inv}(T_p) = 0
\]
for large $q$. We must show that $\sum_{p\in F} \pi^{(p)}(T_p) = 0$. So, since each
$\pi^{(p)}(T) = \psi^{(p)}(T)|_{\partial \Omega}$, we have to check that
\[\textstyle
    \sum_{p\in F} f^{T_p}(g,\omega) = 0
\]
for all $(g,\omega)\in\Gg|_{\partial\Omega}$. Fix $(g,\omega)\in\Gg|_{\partial\Omega}$
with $\omega \in \Omax$, and observe that
\[\textstyle
 \sum_{p\in F} f^{T_p}(g,\omega) = \delta_{g,e} \sum_{p\in F\cap\omega} i_\omega(T_p).
\]
Since $F\cap\omega \subset P$ is finite and $\omega$ is directed, the element
\[\textstyle
    r:= \bigvee_{p\in F\cap\omega} p
\]
belongs to $\omega$, and
\[\textstyle
\sum_{p\in F\cap\omega} i_\omega(T_p)
    = i_\omega\Big(\sum_{p\in F\cap\omega} i_{p\inv r}(T_p)\Big).
\]
Since $\omega$ is directed and countable we can choose a sequence $(r_n)_{n=1}^\infty
\subset \omega$ satisfying
\begin{itemize}
 \item $r_1 \geq r$,
 \item $r_{n+1} \geq r_n$ for all $n$
 \item for all $ q\in\omega$, there exists $n$ with $r_n \geq q$.
\end{itemize}
For each $n$, choose $q_n \geq r_n$ and $\omega_n \in \partial\Omega$ with
$q_n\in\omega_n$ (and hence $r_n \in \omega_n$) such that
\[\textstyle
 \sum_{p\in F} i_{p\inv q_n}(T_p) = 0.
\]
Then in particular,
\begin{equation} \label{eqn:qnzero}\textstyle
 \sum_{p\in F\cap\omega_n} i_{p\inv q_n}(T_p) = \sum_{p\in F} i_{p\inv q_n}(T_p) = 0
\end{equation}
since $p\in F\setminus\omega_n$ implies $p\nleq q_n$ and so $i_p^{p\inv q_n} = 0$. We
claim that $\omega_n \to \omega$ as $n\to\infty$. To see this fix $\Zz(A_0,A_1)$
containing $\omega$. Since $A_1 \subset \omega$, $A_1$ is directed. Let
\[\textstyle
 s = \bigvee_{p\in A_1} p.
\]
By definition of $(r_n)_{n=1}^\infty$ there is an $n_1$ with $r_{n_1} \geq s$. Then $A_1
\subset \omega_{r_n}$ for any $n\geq n_1$.

For each $q \in A_0$, let $N_q := \max\{n : q \in  \omega_n\}$. Suppose for contradiction
that $q \in A_0$ satisfies $N_q=\infty$. For any $p \in\omega$ we can find $r_j \geq p$.
Since $N_q = \infty$ we can find $k\geq j$ with $q \in \omega_k$. But then
\[
    q \vee r_k < \infty \implies q \vee r_j < \infty  \implies q \vee p < \infty.
\]
Since $p\in\omega$ was arbitrary we deduce that $q\vee p < \infty$ for all $p\in\omega$
and hence $q\in\omega$ by Lemma~\ref{lemma:maximal_sup}. This contradicts $\omega \in
\Zz(A_0,A_1)$. Therefore $N_q$ is finite for every $q \in A_0$. Now put
\[\textstyle
    N := \max\big\{n_1, \max_{q \in A_0} N_q\big\} < \infty.
\]
Then $\omega_n \in \Zz(A_0,A_1)$ for any $n > N$ and $\omega_n \to \omega$ as claimed.
Since $F$ is finite, there exists $N_F$ such that $n\geq N_F$ implies $F\cap \omega_n =
F\cap \omega$.

Hence, using Lemma~\ref{lemma:limit_omega} at the third equality and~\eqref{eqn:qnzero}
at the last one, we have
\begin{align*}
\sum_{p\in F} f^{T_p} (g,\omega) &= \delta_{g,e} \sum_{p\in F\cap\omega} i_\omega(T_p)
     = \delta_{g,e} i_\omega \left( \sum_{p\in F\cap\omega} i_{p\inv r}(T_p) \right)
     = \delta_{g,e} \lim_{n\to\infty} i_{\omega_n} \left( \sum_{p\in F\cap\omega} i_{p\inv r}(T_p) \right)\\
    &= \delta_{g,e} \lim_{n\to\infty} i_{\omega_n} \left( \sum_{p\in F\cap\omega} i_{p\inv q_n}(T_p) \right)
     = \delta_{g,e} \lim_{n\to\infty} i_{\omega_n} \left( \sum_{p\in F\cap\omega_n} i_{p\inv q_n}(T_p) \right) = 0.
\end{align*}
Since $\Omax$ is dense in $\partial\Omega$ and $\sum_{p \in F} \pi^{(p)}(T_p)$ is a
continuous section of $\Ee$, we deduce that $\sum_{p\in F} \pi^{(p)}(T_p) = 0$.
\end{proof}

\section{The isomorphisms}\label{sec:isomorphisms}

In this section, we prove our main results: that the $C^*$-algebra of the Fell bundle
$\Ee$ constructed in Section~\ref{sec:Fell bundle} is isomorphic to the Nica--Toeplitz
algebra $\NT_X$ and, under the hypothesis that the left actions of $A$ on the $X_p$ are
implemented by injective homomorphisms, that the $C^*$-algebra of the restriction of
$\Ee$ to the boundary groupoid $\Gg|_{\partial \Omega}$ is isomorphic to the
Cuntz--Nica--Pimsner algebra $\NO_X$.

\begin{theorem}
\label{thm:Toeplitzisomorphism} Let $X$ be a compactly aligned product system over a
quasi-lattice ordered group $(G,P)$. Let $\Gg$ and $\Ee$ be the groupoid and Fell bundle
constructed in Section~\ref{sec:Fell bundle}. Then the homomorphism $\Psi : \NT_X \to
C^*(\Gg, \Ee)$ induced by the Toeplitz representation $\psi$ of
Proposition~\ref{prop:nica_covariant_representation} is an isomorphism.
\end{theorem}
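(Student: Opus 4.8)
The plan is to prove surjectivity directly and then deduce injectivity from a gauge-equivariance argument that reduces everything to faithfulness questions settled by the Fock representation.

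First I would check surjectivity. By Proposition~\ref{prop:nica_covariant_representation} we have $\Psi(i_X^{(p)}(S)) = f^S$ for every $S \in \Kk(X_p)$, and the same computation shows that for $x \in X_p$, $y \in X_q$ the product $\psi_q(y)\psi_p(x)^* = f^{\theta_{y,x}}$, where $\theta_{y,x} \in \Kk(X_p,X_q)$. Since finite-rank operators span $\Kk(X_p,X_q)$, linearity and continuity give $f^S \in \operatorname{im}\Psi$ for every $S \in \Kk(X_p,X_q)$. The sections $f^S$ span the $*$-subalgebra $\Gamma$ of Section~\ref{sec:Fell bundle}, whose values densely span each fibre $E_{(g,\omega)}$; hence $\Gamma$ is dense in $C^*(\Gg,\Ee)$ in the inductive-limit topology. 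As $\operatorname{im}\Psi$ is closed and contains $\Gamma$, the map $\Psi$ is surjective.

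For injectivity I would exploit the grading by $G$ carried by both algebras. On $\NT_X$ this is the gauge coaction $\delta$, determined by $\delta(i_X(x)) = i_X(x)\otimes u_p$ for $x \in X_p$; on $C^*(\Gg,\Ee)$ it is the grading induced by the continuous cocycle $c(g,\omega) = g$, whose degree-$e$ part is the restriction $C^*(\Gg^{(0)},\Ee)$, a $C_0(\Omega)$-algebra with fibres the $A_\omega$ of Lemma~\ref{lem:fibres}. Because $\Psi(i_X(x)) = f^x$ is homogeneous of degree $p$ for $x\in X_p$, the map $\Psi$ is equivariant, so it intertwines the canonical degree-$e$ conditional expectations $E_G : \NT_X \to (\NT_X)_e$ and $E_\Gg : C^*(\Gg,\Ee)\to C^*(\Gg^{(0)},\Ee)$; that is, $\Psi\circ E_G = E_\Gg\circ\Psi$. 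Granting that $E_G$ is faithful and that $\Psi$ is injective on the core $(\NT_X)_e$, injectivity of $\Psi$ follows: if $a\geq 0$ and $\Psi(a)=0$ then $\Psi(E_G(a)) = E_\Gg(\Psi(a)) = 0$, whence $E_G(a)=0$ by core injectivity and $a=0$ by faithfulness of $E_G$. Note that we never require $E_\Gg$ itself to be faithful, which is what lets us work with the \emph{full} cross-sectional algebra without an amenability hypothesis.

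Both remaining facts I would extract from the Fock representation $\ell$ of $X$ on $F(X):=\bigoplus_{p\in P}X_p$, which Fowler shows integrates to a faithful representation $\ell_*$ of $\NT_X$ \cite{Fowler2002}. The diagonal compression $D = \bigoplus_q P_{X_q}(\cdot)P_{X_q}$ is a faithful conditional expectation on $\Ll(F(X))$, and $\ell_*$ is gauge-equivariant with $\ell_*\circ E_G = D\circ\ell_*$; since $\ell_*$ and $D$ are faithful, so is $E_G$. For the core, recall that $\omega_p := \{g : g \le p\}$ lies in $\Omega$ and that $[e,\omega_p)$ has maximum $p$, so the direct limit defining $A_{\omega_p}$ stabilises and $A_{\omega_p}\cong\Kk(X_p)$. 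Under this identification the evaluation $\operatorname{ev}_{\omega_p}\circ\Psi$ sends $i_X^{(q)}(S)$ to $i_{q\inv p}(S)$ when $q\le p$ and to $0$ otherwise, which is precisely the compression of $\ell_*$ to the summand $X_p$. As $\ell_*$ restricted to the core is the direct sum of these compressions and is faithful, the maps $\operatorname{ev}_{\omega_p}\circ\Psi$ jointly separate points of $(\NT_X)_e$, and so $\Psi$ is injective on the core. The main obstacle I anticipate is the clean verification that $\operatorname{ev}_{\omega_p}\circ\Psi$ really coincides with the $X_p$-compression of $\ell_*$: this requires matching the direct-limit description of $A_{\omega_p}$ from Lemma~\ref{lem:fibres} with the action of $\ell_*$, and confirming that degree-$e$ elements of $\NT_X$ act diagonally on $F(X)$ so that faithfulness of $\ell_*$ on the core reduces to joint faithfulness of the compressions.
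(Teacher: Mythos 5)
Your proof is correct, but on the injectivity half it takes a genuinely different route from the paper's. The paper constructs an explicit inverse: it defines $\Phi(f^{\theta_{y,x}}) = i_X(y)i_X(x)^*$ on the dense subalgebra spanned by the sections $f^S$, checks well-definedness by evaluating convolutions against sections $f^{\theta_{z,a}}$ at points of $\Gg$ and comparing with the action of the Fock representation on $F(X)$, and then proves continuity of $\Phi$ in the inductive-limit topology by decomposing supports into finitely many bisections, on which the $C^*$-norm agrees with the sup-norm; the resulting $*$-homomorphism $\Phi : C^*(\Gg,\Ee) \to \NT_X$ is by construction an inverse for $\Psi$. You instead run the standard graded-uniqueness pattern: equivariance of $\Psi$ for the gauge coaction on $\NT_X$ and the cocycle grading $c(g,\omega)=g$ (the paper's Lemma~\ref{lemma:coactionfromgrading} supplies the coaction on $C^*(\Gg,\Ee)$, and Quigg's Lemma~1.3(a) of \cite{Q96} the expectations $E_G$, $E_\Gg$), faithfulness of $E_G$ via the block-diagonal expectation on $\Ll(F(X))$, and injectivity on the core via the evaluations $\operatorname{ev}_{\omega_p}\circ\Psi$, which you correctly identify with the compressions of $\ell_*$ to the summands $X_p$ (since the core acts diagonally on $F(X)$, these are jointly faithful). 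Both arguments ultimately rest on Fowler's theorem that the Fock representation induces a faithful representation of $\NT_X$; the paper cites only isometry of $\ell$ on $X$ but in fact uses injectivity of $\ell_*$ on the spanned subalgebra, so your citation is if anything more careful. What each approach buys: yours is conceptually cleaner and isolates exactly why no amenability hypothesis is needed --- as you note, faithfulness of $E_\Gg$ on the \emph{full} cross-sectional algebra is never used, only the intertwining $\Psi \circ E_G = E_\Gg \circ \Psi$; the paper's explicit $\Phi$, on the other hand, is reused verbatim in the proof of Theorem~\ref{thm:CNPisomorphism}, where $\Phi = \Psi\inv$ is shown to factor through restriction to the boundary groupoid --- though the abstractly-defined inverse your argument yields would serve equally well there.

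One local error, which happily does not propagate: $A_{\omega_p}$ is \emph{not} isomorphic to $\Kk(X_p)$ in general. Since $p$ is the maximum of $[e,\omega_p)$, the direct limit does stabilise, giving $\varinjlim_{q \le p} \Ll(X_q) = \Ll(X_p)$; but $A_{\omega_p}$ is then the closed span $\clsp\big(\textstyle\sum_{q\le p} i_{q\inv p}(\Kk(X_q))\big) \subseteq \Ll(X_p)$, which contains, for instance, $i_p(\Kk(X_e)) = \varphi_p(A)$, and these operators need not be compact on $X_p$ (they are only when the left actions are by compacts). What your argument actually uses is just the correct part of this: the embedding $A_{\omega_p} \subseteq \Ll(X_p)$ coming from the stabilised limit, together with the formula $\operatorname{ev}_{\omega_p}(f^S) = i_{q\inv p}(S)$ for $S \in \Kk(X_q)$ with $q \le p$ (and $0$ otherwise). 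So the matching of $\operatorname{ev}_{\omega_p}\circ\Psi$ with the $X_p$-compression of $\ell_*$, and hence your core-injectivity step, stands; you should simply delete the claim $A_{\omega_p} \cong \Kk(X_p)$.
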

\begin{proof}
We begin by showing that $\Psi$ is surjective. By definition of the topology on $\Ee$, it
suffices to show that $f^S \in \operatorname{Im}\Psi$ for all $S\in\Kk(X_p,X_q)$. If $S,
T \in \Kk(X_p,X_q)$ then $f^S + f^T = f^{S+T}$, so it suffices to show that
$f^{\theta_{y,x}} \in \operatorname{Im}\Psi$ for all $x\in X_p$ and $y\in X_q$. Given
$(g,\omega)\in\Omega$ we have
\begin{align*}
 [\psi_q(y)\psi_p(x)^*](g,\omega) &= [f^y * {f^x}^* ](g,\omega)
     = \sum_{h\omega \cap P \neq \varnothing} f^y(gh\inv,h\omega) f^x(h\inv,h\omega)^* \\
    &= \delta_{g,qp\inv} f^y(q,p\inv\omega) f^x(p,p\inv\omega)^*
     = \delta_{g,qp\inv} i_{(q,p\inv\omega)}(x) i_{(p,p\inv\omega)}(y)^*\\
    &= \delta_{g,qp\inv} i_{(q,p\inv\omega)}(x) i_{(p\inv,\omega)}(y^*)
     = \delta_{g,qp\inv} i_{(qp\inv,\omega)}(xy^*)
     = f^{\theta_{x,y}} (g,\omega)
\end{align*}
as required. To see that $\Psi$ is injective, we construct an inverse. We begin by
showing that there is a well-defined map $\Phi : \textrm{span}\{ f^S : S\in \Kk(X_p,X_q)
\} \to \NT_X$ satisfying
\begin{equation}
\label{eqn:well-defined}
 \Phi(f^{\theta_{y,x}}) = i_X(y)i_X(x)^*\quad\text{ for all $x \in X_p$ and $y \in X_q$.}
\end{equation}
To see that such a map exists, suppose that
\[\textstyle
 \sum_{j=1}^n f^{\theta_{y_j,x_j}} = 0 \in \Gamma_c(\Gg;\Ee).
\]
It suffices to show that
\[\textstyle
 \sum_{j=1}^n i_X(y_j)i_X(x_j)^* = 0 \in \NT_X.
\]
Since the Fock representation $l : X \to \Ll(F(X))$ is isometric
\cite[page~340]{Fowler2002}, this is equivalent to
\[\textstyle
 \sum_{j=1}^n l(y_j)l(x_j)^* = 0 \in \Ll(F(X)).
\]
To see this, fix $z\in X_r$ and $a\in A$. For any $p\in P$ we have
\begin{align*}
 \left( \sum_{j=1}^n l(y_j)l(x_j)^*(z\cdot a) \right)(p)
 &= \sum_{\substack{p_j \leq r \\ q_j p_j\inv r = p}} y_j\left( i_{p_j\inv r}(x_j)^*(z\cdot a) \right).
\end{align*}
Hence $\left( \sum_{j=1}^n f^{\theta_{y_j,x_j}} \right) * f^{\theta_{z,a}} = 0$, and so
\begin{align*}
 0  &= \left( \Big( \sum_{j=1}^n f^{\theta_{y_j,x_j}} \Big) * f^{\theta_{z,a}}\right) (p,[e])
     = \sum_{\substack{q_j p_j\inv r = p \\ p_j \in [r] }} i_{(q_jp_j\inv, [r])}(\theta_{y_j,x_j}) i_{(r,[e])}(\theta_{(z,a)}) \\
    &= \sum_{\substack{ p_j\leq r \\ q_jp_j\inv r = p }} i_{p_j\inv r}(\theta_{y_j,x_j}) i_e(\theta_{z,a})
     = \sum_{\substack{p_j \leq r \\ q_j p_j\inv r = p}} y_j\left( i_{p_j\inv r}(x_j)^*(z\cdot a) \right).
\end{align*}
Hence
\[\textstyle
 \left( \sum_{j=1}^n l(y_j)l(x_j)^*(z\cdot a) \right)(p) = 0.
\]
Since $z\cdot a$ and $p$ were arbitrary, we see that there is a well-defined linear map
satisfying~\eqref{eqn:well-defined}.

We now show that $\Phi$ in continuous in the inductive limit topology. Suppose that $f_i
\to f$ in $\Gamma_c(\Gg;\Ee)$. Fix a compact subset $K \subset \Gg$ such that $f$
and each of the $f_i$ vanishes off $K$. Write $f = \sum_{j=1}^n f^{S_j}$ where each $S_j
\in \Kk(X_{p_j},X_{q_j})$. Inductively define
\[\textstyle
A_1 = \operatorname{supp} (f^{S_1}) \qquad\text{ and }\qquad
A_{k+1} = \operatorname{supp}(f^{S_{k+1}}) \setminus \Big(\bigcup_{j=1}^k A_k \Big)
\]
for $1\leq k \leq n$. Then each $A_k\subset \Gg$ is a bisection, so that $\|(f_i -
f)|_{A_k}\|_{C^*(\Gg, \Ee)} = \|(f_i - f)|_{A_k}\|_\infty$ for all $i$. Define the set
\[\textstyle
 A_{n+1} = K \setminus \left( \bigcup_{j=1}^n A_k \right).
\]
Without loss of generality, we may assume that $A_{n+1}$ is also a bisection. Then there
exists $N\geq 1$ such that for all $i\geq N$ and $1 \leq k \leq n$
\[
 \| (f_i - f)|_{A_k} \|_\infty < \frac{\varepsilon}{n+1}.
\]
So for $i\geq N$
\begin{align*}
\|\Phi(f_i) - \Phi(f)\| & = \Big\|\sum_{j=1}^{n}\Phi(f_i - f^{S_j})\Big\|
     = \Big\|\sum_{j=1}^{n} \sum_{k=1}^{n+1} \Phi((f_i - f^{S_j})|_{A_k}) \Big\| \\
    &\leq \sum_{j=1}^{n} \sum_{k=1}^{n+1} \|\Phi((f_i - f^{S_j})|_{A_k}) \|
     \leq \sum_{j=1}^{n} \sum_{k=1}^{n+1} \|(f_i - f^{S_j})|_{A_k} \|_\infty
     < \varepsilon.
\end{align*}
 So $\Phi(f_i) \to \Phi(f)$. Since the inductive limit topology on
 $\Gamma_c(\Gg;\Ee)$ is weaker than the norm topology, we see that
 $\Phi$ is bounded in norm. Since $\Gamma_c(\Gg;\Ee)$ is norm dense in
 $C^*(\Gg,\Ee)$, $\Phi$ extends to a $*$-homomorphism
\[
 \Phi : C^*(\Gg,\Ee) \to \NT_X
\]
which is, by construction, an inverse for $\Psi$. So $C^*(\Gg,\Ee) \cong \NT_X$.
\end{proof}

\begin{theorem}
\label{thm:CNPisomorphism} Let $X$ be a nondegenerate compactly aligned product system
over a quasi-lattice ordered group $(G,P)$. Suppose that the homomorphisms $\phi_p : A
\to \Ll(X_p)$ implementing the left actions are all injective. Let $\Gg$ and $\Ee$ be the
groupoid and Fell bundle constructed in Section~\ref{sec:Fell bundle}. Then the
homomorphism $\Pi : \NO_X \to C^*(\Gg|_{\partial\Omega},\Ee)$, induced by the
Cuntz--Nica--Pimsner covariant representation $\pi$ of
Proposition~\ref{prop:cp_covariant_representation}, is an isomorphism.
\end{theorem}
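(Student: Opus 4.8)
The plan is to mirror the proof of Theorem~\ref{thm:Toeplitzisomorphism}: first dispatch surjectivity, then construct an explicit inverse for $\Pi$. Write $\rho : \NT_X \to \NO_X$ for the canonical quotient (so that $j_X = \rho \circ i_X$) and $\kappa : C^*(\Gg,\Ee) \to C^*(\Gg|_{\partial\Omega},\Ee)$ for the restriction homomorphism $\xi \mapsto \xi|_{\Gg|_{\partial\Omega}}$. Since $\pi_p(x) = f^x|_{\Gg|_{\partial\Omega}} = \kappa(\psi_p(x))$, the square relating $\Pi,\rho,\kappa$ and the isomorphism $\Psi$ of Theorem~\ref{thm:Toeplitzisomorphism} commutes: $\Pi \circ \rho = \kappa \circ \Psi$. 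As $\Psi$ is an isomorphism and both $\kappa$ and $\rho$ are surjective, surjectivity of $\Pi$ is immediate.

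For injectivity I would produce a two-sided inverse. Let $\Phi : C^*(\Gg,\Ee) \to \NT_X$ be the inverse of $\Psi$ from Theorem~\ref{thm:Toeplitzisomorphism}, so that $\Phi(f^{\theta_{y,x}}) = i_X(y) i_X(x)^*$. It suffices to show that $\rho \circ \Phi$ annihilates every section vanishing on $\Gg|_{\partial\Omega}$; then $\rho \circ \Phi$ descends through $\kappa$ to a homomorphism $\Phi_\partial$ with $\Phi_\partial \circ \kappa = \rho \circ \Phi$, and one checks on the generators $f^x|_{\Gg|_{\partial\Omega}}$ and $\kappa(f^S)$ that $\Phi_\partial$ inverts $\Pi$. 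Both $C^*(\Gg,\Ee)$ and $\NO_X$ carry gradings by $G$ (by the cocycle $(g,\omega)\mapsto g$, and by the gauge coaction, respectively) for which $\Phi$ and $\kappa$ are equivariant, with faithful conditional expectations onto the degree-$e$ subalgebras. This lets me reduce to the homogeneous case and then, replacing a homogeneous section $h$ of degree $g$ by $h^**h$ (which is diagonal, of degree $e$, hence of the form $\sum_p f^{S_p}$ with $S_p\in\Kk(X_p)$ and still vanishing on $\Gg|_{\partial\Omega}$), to the diagonal case. The problem thus reduces to a converse of the computation in Proposition~\ref{prop:cp_covariant_representation}: if a finite $F\subset P$ and operators $T_p\in\Kk(X_p)$ satisfy $\sum_{p\in F} f^{T_p}|_{\Gg|_{\partial\Omega}}=0$, then $\sum_{p\in F} i_{p\inv q}(T_p)=0$ for large $q$; Cuntz--Nica--Pimsner covariance of $j_X$ (Definition~\ref{def:c-n-p-cov}) then gives $\sum_{p\in F} j_X^{(p)}(T_p)=0$, and running this back through $\Phi$ and $\rho$ yields $\rho\circ\Phi(h^**h)=0$, whence $\rho\circ\Phi(h)=0$.

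Proving this reduced assertion is the crux, and it is where injectivity of the $\varphi_p$ enters decisively. Injectivity of each $\varphi_p$ forces every connecting map $i_r : \Kk(X_p) \to \Kk(X_{pr})$ to be isometric, so that for a directed hereditary $\omega$ the element $g^\omega_r := \sum_{p\in F\cap\omega} i_{p\inv r}(T_p)$ has norm independent of $r$ once $r$ dominates the finite set $F\cap\omega$; in particular $\sum_{p\in F\cap\omega} i_\omega(T_p)=0$ if and only if $g^\omega_r=0$ for such $r$. Arguing by contraposition, suppose $\sum_{p\in F} i_{p\inv q}(T_p)\neq0$ on every tail above some $q_0$. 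I would build, by a dovetailing (Zorn-type) construction, an increasing sequence $q_0\le r_1\le r_2\le\cdots$ with $g^{\omega}_{r_n}\neq 0$ for all $n$, arranging simultaneously that $\omega := \bigcup_n \{h : h\le r_n\}$ is saturated, meaning that $q\in\omega$ whenever $q\vee p<\infty$ for all $p\in\omega$. The technique of Lemma~\ref{lemma:maximal_sup} (forming $q\vee\omega$) shows that the saturated directed hereditary sets are precisely the maximal ones, so $\omega\in\Omax\subseteq\partial\Omega$; and since $F$ is finite the set $F\cap\omega$ eventually stabilises, after which the isometry of the $i_r$ makes $\|g^\omega_{r_n}\|$ a positive constant, so $\sum_{p\in F\cap\omega} i_\omega(T_p)\neq0$. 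By continuity of the section and density of $\Omax$ in $\partial\Omega$ (Lemmas~\ref{lem:usc} and~\ref{lemma:limit_omega}) this contradicts $\sum_{p\in F} f^{T_p}|_{\Gg|_{\partial\Omega}}=0$.

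The main obstacle is exactly this maximality construction: one must manufacture a single boundary point $\omega$ that both detects the failure of the covariance relation and is maximal, and these demands conflict, since enlarging $\omega$ can admit new indices $p\in F$ and cause $g^\omega_r$ to cancel to zero. The isometry of the connecting maps—equivalently, injectivity of the left actions—is precisely what prevents such cancellation from destroying a nonzero value after $F\cap\omega$ has stabilised, and the finiteness of $F$ is what guarantees that it does stabilise; together these are what make the dovetailing succeed.
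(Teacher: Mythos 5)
Your overall plan is genuinely different from the paper's: the paper never proves the converse-of-(CNP) statement you isolate, but instead imports the co-universal property of $\NO_X^r$ \cite[Theorem~4.1]{CLSV2009} to get a homomorphism $\phi : C^*(\Gg|_{\partial\Omega},\Ee) \to \NO_X^r$, and then uses the fact that the quotient map $\lambda_r : \NO_X \to \NO_X^r$ is isometric on the single spectral subspace $(\NO_X)_e$ --- a property of \emph{arbitrary} coactions requiring no faithfulness --- to pull the vanishing back to $\NO_X$. Your crux lemma itself is a sound idea, and your maximality/dovetailing sketch for it is essentially workable: injectivity of the $\varphi_p$ does make every connecting map $i_r$ an injective, hence isometric, $*$-homomorphism; the saturation characterisation of maximality follows as in Lemma~\ref{lemma:maximal_sup}; and the implicit countability of $\omega$ is also used in the paper's own proof of Proposition~\ref{prop:cp_covariant_representation}. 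If the reductions were valid, this would yield a self-contained proof avoiding \cite{CLSV2009}.

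However, two steps in your reduction have genuine gaps, the first fatal as written. You assert that $C^*(\Gg,\Ee)$ and $\NO_X$ carry \emph{faithful} conditional expectations onto their degree-$e$ subalgebras. These are full (universal) algebras, and for a coaction of a discrete group the canonical expectation onto the $e$-spectral subspace is faithful essentially only when full and reduced norms agree; compare $C^*(F_2)$, where the degree-$e$ Fourier-coefficient expectation is not faithful. Indeed, faithfulness of the expectation $P : \NO_X \to (\NO_X)_e$ is tantamount to injectivity of $\lambda_r$, which is exactly the paper's final Proposition and is proved there only under amenability of $\Gg|_{\partial\Omega}$ (via \cite{SW} and \cite{Q96}); if your claim held unconditionally, that amenability hypothesis would be superfluous. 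So your reduction to the homogeneous case rests on an assertion that is false in the generality of the theorem, and is essentially circular with the corollary the theorem is meant to deliver. Second, your diagonal step conflates the dense $*$-algebra with its closure: a degree-$e$ element of $\ker\kappa$ is not of the form $\sum_{p} f^{S_p}$ but only a limit of such sums, and the approximating sums need not vanish on $\Gg|_{\partial\Omega}$ (the support of every $f^{S_p}$ meets the boundary), so your crux lemma does not directly apply to it. Both gaps are repairable without any expectation: identify $\ker\kappa$ with $\overline{\Gamma_c(\Gg|_{\Omega\setminus\partial\Omega};\Ee)}$ using exactness of the full cross-sectional sequence for the open invariant complement of $\partial\Omega$, observe that compactly supported kernel sections split into finitely many homogeneous pieces each vanishing on the boundary, and use total disconnectedness of $\Omega$ --- indicators of cylinder sets multiply each $f^{T}$ back into the spanning family $\Gamma$ --- so that exact finite sums $\sum_{p\in F} f^{T_p}$ supported off $\partial\Omega$ are dense in the diagonal part of the kernel; boundedness of the induced map then finishes the argument.
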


Before we prove Theorem~\ref{thm:CNPisomorphism}, we need to do some background work on
coactions. The first lemma that we need is a general statement about coactions of
discrete groups. The following brief summary of discrete coactions is based on \cite[\S
A.3]{EKQR}. Given a discrete group $G$, the universal property of $C^*(G)$ shows that
there is a homomorphism $\delta_G : C^*(G) \to C^*(G) \otimes C^*(G)$ whose extension to
$\mathcal{M} C^*(G)$ satisfies $\delta_g(i_G(g)) = i_G(g) \otimes i_G(g)$. A coaction of
a discrete group $G$ on a $C^*$-algebra $A$ is a nondegenerate homomorphism $\delta : A
\to A \otimes C^*(G)$ which satisfies the coaction identity
\[
(\delta \otimes 1_{C^*(G)}) \circ \delta = (1 \otimes \delta_G) \circ \delta.
\]
The coaction $\delta$ is coaction-nondegenerate if $\clsp \delta(A)(1_{\mathcal{M}(A)}
\otimes C^*(G)) = A \otimes C^*(G)$.

It is claimed at the beginning of Section~1 of \cite{Q96} that, in our setting of
discrete groups $G$, every coaction of a discrete group is coaction-nondegenerate. This
assertion was used in results of \cite{CLSV2009} that we in turn will want to use in the
proof of Theorem~\ref{thm:CNPisomorphism}. However, this assertion in \cite{Q96} depends
on \cite[Proposition~2.5]{Q94}, and a gap has recently been identified in the proof of
this result \cite{KQerratum}. The following simple lemma is well known, but hard to find
in the literature. We will use it first to show that the coactions used in
\cite{CLSV2009} are indeed coaction-nondegenerate (so the results of \cite{CLSV2009} are
not affected by the issue identified in \cite{KQerratum}), and then again in the proof of
Lemma~\ref{lemma:coactionfromgrading} below.

Recall that if $\delta : A \to A \otimes C^*(G)$ is a coaction of a discrete group, then
for each $g \in G$, we write $A_g$ for the \emph{spectral subspace} $\{a \in A :
\delta(a) = a \otimes i_G(g)\}$.

\begin{lemma}\label{lem:coactionnondegeneracy}
Let $A$ be a $C^*$-algebra and $G$ a discrete group. Suppose that $\delta : A \to A
\otimes C^*(G)$ is a coaction. Then $\delta$ is coaction-nondegenerate if and only if $A
= \clsp\bigcup_{g \in G} A_g$.
\end{lemma}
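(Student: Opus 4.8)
The plan is to prove both implications directly from the definitions, exploiting the fact that for a discrete group the spectral subspaces $A_g$ together with the coaction identity give us fine control. First I would establish the easy direction. Assuming $A = \clsp\bigcup_{g \in G} A_g$, it suffices by continuity and linearity to check that $\clsp \delta(A)(1 \otimes C^*(G))$ contains $a \otimes i_G(h)$ for each $a \in A_g$ and each $h \in G$. But for $a \in A_g$ we have $\delta(a) = a \otimes i_G(g)$, so $\delta(a)(1 \otimes i_G(g^{-1}h)) = a \otimes i_G(h)$, which lies in the span. Since the elements $i_G(h)$ span a dense subspace of $C^*(G)$, we recover all of $A \otimes C^*(G)$, so $\delta$ is coaction-nondegenerate.

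The converse is the substantive direction and the place where I expect the main obstacle. Here I would use the standard faithful conditional expectation / averaging machinery associated to a coaction to produce the spectral projections. The key tool is that for each $g \in G$ there is a bounded linear idempotent map $\Phi_g : A \to A$ onto $A_g$, obtained by slicing $\delta$ against the evaluation functional on $C^*(G)$ dual to $i_G(g)$; concretely $\Phi_g = (\mathrm{id} \otimes \varepsilon_g) \circ \delta$, where $\varepsilon_g$ is the slice map determined by the coefficient functional $u \mapsto \langle u\, \delta_g, \delta_e\rangle$ on $C^*(G) \subseteq \mathcal{B}(\ell^2(G))$. One checks using the coaction identity that $\Phi_g$ lands in $A_g$ and restricts to the identity on $A_g$. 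I would then argue that $\clsp\bigcup_g A_g = \clsp\bigcup_g \Phi_g(A)$ is all of $A$: given $a \in A$, coaction-nondegeneracy lets me approximate $a \otimes 1$ (or $\delta(a)$) by sums $\sum_i \delta(b_i)(1 \otimes u_i)$ with $u_i \in C^*(G)$, and applying the slice maps $\mathrm{id} \otimes \varepsilon_g$ to these approximants produces elements of $A_g$ converging appropriately to pieces of $a$.

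The delicate point, and the one I would flag as the main obstacle, is making the slicing argument converge correctly: the slice maps $\mathrm{id} \otimes \varepsilon_g$ are only bounded (not multiplicative), so I must be careful that applying them to a dense subset of $A \otimes C^*(G)$ and recombining actually yields norm-convergence back to $a$ rather than just weak statements. The clean way to handle this is to observe that coaction-nondegeneracy gives $\delta(a) \in \clsp \delta(A)(1 \otimes C^*(G))$, then approximate the $C^*(G)$ factors by finitely supported functions on $G$ so that only finitely many spectral subspaces are involved, and finally apply $\mathrm{id} \otimes \varepsilon_e$ (or a partition-of-unity argument over $G$) to extract $a$ itself as a norm-limit of elements from $\bigcup_g A_g$. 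Since $G$ is discrete the functionals $\varepsilon_g$ form a separating family, which is what guarantees that no part of $a$ is lost in the slicing.

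<br>

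I would present the two implications as separate paragraphs within the proof, relegating the verification that $\Phi_g$ is idempotent with range $A_g$ to a short computation using the coaction identity, and citing \cite{EKQR} for the basic properties of slice maps on $A \otimes C^*(G)$ rather than reproving them.
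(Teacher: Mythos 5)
Your proposal is correct, and your easy direction coincides exactly with the paper's: for $a \in A_g$ one has $\delta(a)(1 \otimes i_G(g\inv h)) = a \otimes i_G(h)$, and approximating a general $a$ by sums of spectral elements gives coaction-nondegeneracy up to $\varepsilon$. Where you genuinely diverge is the forward implication: the paper disposes of it in one line by citing \cite[Proposition~A.31]{EKQR}, whereas you reprove it via the spectral projections $\Phi_g = (\mathrm{id} \otimes \varepsilon_g)\circ\delta$. Your route works, and the step you flagged as delicate closes cleanly: the identity $\varepsilon_e(v\, i_G(h)) = \varepsilon_{h\inv}(v)$ gives $(\mathrm{id}\otimes\varepsilon_e)\bigl(\delta(b)(1 \otimes i_G(h))\bigr) = \Phi_{h\inv}(b) \in A_{h\inv}$, so since $u \mapsto (\mathrm{id}\otimes\varepsilon_e)(\delta(b)(1\otimes u))$ is bounded, slicing any approximant $\sum_i \delta(b_i)(1 \otimes u_i)$ of $a \otimes 1$ (note $1 = i_G(e) \in C^*(G)$ as $G$ is discrete) by $\mathrm{id}\otimes\varepsilon_e$ produces an element of $\clsp\bigcup_g A_g$ within $\varepsilon$ of $a$; no partition-of-unity argument is needed. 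Two cosmetic repairs: since $C^*(G)$ is the \emph{full} group algebra it does not sit in $\mathcal{B}(\ell^2(G))$, so $\varepsilon_g$ should be $u \mapsto \langle \lambda(u)\delta_e, \delta_g\rangle$ with $\lambda$ the regular representation (your formula $\langle u\delta_g, \delta_e\rangle$ picks out the coefficient of $i_G(g\inv)$ rather than $i_G(g)$); and one should note the tensor product is the minimal one, which is what makes the slice maps $\mathrm{id}\otimes\varepsilon_g$ contractive. What your approach buys is a self-contained proof independent of \cite{EKQR}---it is essentially the standard argument behind that proposition, in the spirit of \cite{Q96}---at the cost of the slice-map bookkeeping that the paper sidesteps by citation; given that the whole point of this lemma in the paper is to route around a gap identified in the coaction literature \cite{KQerratum}, a self-contained argument is arguably the more robust choice.
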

\begin{proof}
First suppose that $\delta$ is coaction-nondegenerate. Then \cite[Proposition~A.31]{EKQR}
shows that $A$ is densely spanned by its spectral subspaces. Now suppose that $A$ is
densely spanned by its spectral subspaces. Fix a typical spanning element $a \otimes
i_G(G)$ of $A \otimes C^*(G)$. Fix $\varepsilon$ and choose finitely many $g_i \in G$ and
$a_i \in A_{g_i}$ such that $\|a - \sum_i a_i\| < \varepsilon$. Then
\[\textstyle
\Big\|\sum_i \delta(a_i) (1 \otimes i_G(g_i^{-1}g)) - a \otimes i_G(g)\Big\|
    = \Big\|\Big(\sum_i a_i - a\Big) \otimes i_G(g)\Big\|
    < \varepsilon.\qedhere
\]
\end{proof}

\begin{corollary}
The coactions of $G$ on $\NT_X$ and $\NO_X$ used in \cite{CLSV2009} are
coaction-nondegenerate.
\end{corollary}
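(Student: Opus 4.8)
The plan is to deduce the corollary directly from Lemma~\ref{lem:coactionnondegeneracy}, so that for each of $\NT_X$ and $\NO_X$ it suffices to show that the algebra is densely spanned by its spectral subspaces. Recall that the coaction $\delta$ of $G$ on $\NT_X$ constructed in \cite{CLSV2009} is determined on generators by $\delta(i_X(x)) = i_X(x) \otimes i_G(p)$ for $x \in X_p$, and the corresponding coaction on $\NO_X$ is characterised analogously in terms of $j_X$; the latter descends from the former because the Cuntz--Nica--Pimsner relation is homogeneous for the $P$-grading, so the defining ideal is invariant under $\delta$.

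First I would invoke the standard fact that $\NT_X = \clsp\{i_X(x) i_X(y)^* : p,q \in P,\ x \in X_p,\ y \in X_q\}$, which follows from Nica covariance together with compact alignment (see \cite{Fowler2002, CLSV2009}). For such a spanning element, with $x \in X_p$ and $y \in X_q$, I would compute, using that $\delta$ is a homomorphism, that $i_G(q)^* = i_G(q^{-1})$ in $\mathcal{M}C^*(G)$, and that $i_G(p) i_G(q^{-1}) = i_G(pq^{-1})$:
\begin{equation*}
\delta(i_X(x) i_X(y)^*) = \big(i_X(x) \otimes i_G(p)\big)\big(i_X(y) \otimes i_G(q)\big)^* = i_X(x) i_X(y)^* \otimes i_G(pq^{-1}).
\end{equation*}
Hence $i_X(x) i_X(y)^*$ lies in the spectral subspace $(\NT_X)_{pq^{-1}}$. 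It follows that $\NT_X = \clsp\bigcup_{g \in G}(\NT_X)_g$, and Lemma~\ref{lem:coactionnondegeneracy} then gives that $\delta$ is coaction-nondegenerate.

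For $\NO_X$ the argument is the same: it is densely spanned by the elements $j_X(x) j_X(y)^*$ with $x \in X_p$ and $y \in X_q$, and the identical computation (with $i_X$ replaced by $j_X$) shows that each such element lies in the spectral subspace $(\NO_X)_{pq^{-1}}$, so that $\NO_X = \clsp\bigcup_{g \in G}(\NO_X)_g$; a second application of Lemma~\ref{lem:coactionnondegeneracy} finishes the proof. The only point genuinely requiring care is the identification of the dense spanning set by the products $i_X(x) i_X(y)^*$ (respectively $j_X(x) j_X(y)^*$): this is exactly where Nica covariance and compact alignment are needed, in order to rewrite an arbitrary word in the generators and their adjoints as a limit of sums of such elements. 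Once this reduction is granted, everything else is the routine homomorphism computation above.
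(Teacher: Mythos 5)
Your proof is correct and takes essentially the same approach as the paper: both reduce the statement via Lemma~\ref{lem:coactionnondegeneracy} to showing that $\NT_X$ and $\NO_X$ are densely spanned by their spectral subspaces, using Fowler's spanning set $\{i_X(x)i_X(y)^* : x, y \in X\}$ (respectively the corresponding elements $j_X(x)j_X(y)^*$). Your explicit computation that $\delta\big(i_X(x)i_X(y)^*\big) = i_X(x)i_X(y)^* \otimes i_G(pq^{-1})$ for $x \in X_p$, $y \in X_q$ simply spells out the step the paper states implicitly when it asserts that each spanning element belongs to a spectral subspace.
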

\begin{proof}
By construction (see \cite{Fowler2002}), the algebra $\NT_X$ is the closure of the span of the
elements $i_X(x) i_X(y)^*$ where $x,y \in X$. Hence $\NO_X$ is densely spanned by the
corresponding elements $j_X(x) j_X(y)^*$. The coactions of \cite{CLSV2009} are given by
$\delta(i_X(x)) = i_X(x) \otimes i_G(g)$ and $\delta(j_X(x)) = j_X(x) \otimes i_G(g)$
whenever $x \in X_g$. So each spanning element of $\NT_X$ and of $\NO_X$ belongs to a
spectral subspace for $\delta$. Hence $\NT_X$ and $\NO_X$ are spanned by their spectral
subspaces. Thus Lemma~\ref{lem:coactionnondegeneracy} shows that the coactions $\delta$
are coaction-nondegenerate.
\end{proof}

The second lemma that we need establishes that the $C^*$-algebra of the Fell bundle of
Section~\ref{sec:Fell bundle} carries a coaction of $G$ that is compatible with the gauge
coactions on $\NT_X$ and $\NO_X$.

\begin{lemma}
\label{lemma:coactionfromgrading} Let $c$ be a continuous grading of a Hausdorff \'etale
groupoid $\Gg$ by a discrete group $G$, and let $\Ee$ be a Fell bundle over $\Gg$. Let
$i_G : G \to C^*(G)$ denote the universal representation of $G$. There is a
coaction-nondegenerate coaction $\delta$ of $G$ on $C^*(\Ee,\Gg)$ satisfying
\[
 \delta(f) = f \otimes i_G(g)
\]
whenever $g\in G$ and $f \in \Gamma_c(\Gg;\Ee)$ satisfies ${\rm supp}(f) \subset
c\inv(\{g\})$.
\end{lemma}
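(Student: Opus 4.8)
The plan is to construct the coaction $\delta$ directly on the dense $*$-subalgebra $\Gamma_c(\Gg;\Ee)$ and then extend by continuity, using Lemma~\ref{lem:coactionnondegeneracy} to obtain coaction-nondegeneracy for free at the end. First I would observe that, since $c$ is continuous and $G$ is discrete, the sets $c\inv(\{g\})$ are clopen in $\Gg$, so every $f \in \Gamma_c(\Gg;\Ee)$ decomposes as a finite sum $f = \sum_{g \in G} f_g$ where $f_g := f|_{c\inv(\{g\})}$ has support in $c\inv(\{g\})$ and only finitely many $f_g$ are nonzero (by compactness of $\operatorname{supp}(f)$). This gives a canonical $G$-grading of $\Gamma_c(\Gg;\Ee)$, and I would define $\delta_0 : \Gamma_c(\Gg;\Ee) \to C^*(\Gg,\Ee) \otimes C^*(G)$ by $\delta_0(f) = \sum_{g} f_g \otimes i_G(g)$, which manifestly satisfies the required formula on homogeneous sections.

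Next I would verify that $\delta_0$ is a $*$-homomorphism. Because $c$ is a groupoid grading, convolution respects the grading, $(\xi_g * \eta_h)$ is supported in $c\inv(\{gh\})$, and involution sends $c\inv(\{g\})$ to $c\inv(\{g\inv\})$; combined with the fact that $\delta_G(i_G(g)) = i_G(g) \otimes i_G(g)$ is multiplicative and $*$-preserving on the $C^*(G)$ tensor factor, the homomorphism property follows by checking it on homogeneous components and summing. I would then check the coaction identity $(\delta_0 \otimes 1) \circ \delta_0 = (1 \otimes \delta_G) \circ \delta_0$ by evaluating both sides on a homogeneous section $f$ supported in $c\inv(\{g\})$: the left side gives $f \otimes i_G(g) \otimes i_G(g)$, and the right side gives $f \otimes \delta_G(i_G(g)) = f \otimes i_G(g) \otimes i_G(g)$, so they agree on a spanning set.

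The main obstacle is boundedness: I must show $\delta_0$ extends continuously to $C^*(\Gg,\Ee)$. The cleanest route is to exhibit $\delta_0$ as the composite of $*$-homomorphisms into $C^*$-algebras so that universality of the $C^*$-norm on $\Gamma_c(\Gg;\Ee)$ applies. Concretely, I would note that $\Gamma_c(\Gg;\Ee) \otimes_{\mathrm{alg}} C^*(G)$ sits inside the $C^*$-algebra $C^*(\Gg,\Ee) \otimes C^*(G)$, and that $\delta_0$ is a $*$-homomorphism from $\Gamma_c(\Gg;\Ee)$ into this target; the point is then to bound $\|\delta_0(f)\|$ by the universal norm $\|f\|$. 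I would do this by composing $\delta_0$ with the slice maps $\mathrm{id} \otimes \phi$ for states $\phi$ on $C^*(G)$, or more directly by recognising that $\delta_0(f) = (\mathrm{id} \otimes \iota)(\tilde\delta(f))$ where $\tilde\delta$ sends $f$ to the section $(\gamma, g) \mapsto$ the appropriately twisted value of $f(\gamma)$ inside the cross-sectional algebra of the pulled-back Fell bundle over $\Gg \times G$; since each bounded representation of $C^*(\Gg,\Ee)\otimes C^*(G)$ pulls back to a bounded representation of $\Gamma_c(\Gg;\Ee)$, the universal norm dominates $\|\delta_0(f)\|$ and boundedness follows.

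Finally, having extended $\delta_0$ to a nondegenerate homomorphism $\delta : C^*(\Gg,\Ee) \to C^*(\Gg,\Ee) \otimes C^*(G)$ satisfying the coaction identity (both properties pass to the completion by density and continuity), I would read off coaction-nondegeneracy from Lemma~\ref{lem:coactionnondegeneracy}: each homogeneous section $f$ supported in $c\inv(\{g\})$ lies in the spectral subspace $A_g$, and since the collection of such sections densely spans $\Gamma_c(\Gg;\Ee)$ and hence $C^*(\Gg,\Ee)$, the algebra is the closed span of its spectral subspaces, so Lemma~\ref{lem:coactionnondegeneracy} gives that $\delta$ is coaction-nondegenerate. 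I expect the verifications of the homomorphism property and the coaction identity to be routine once the grading decomposition is set up, with the boundedness argument being the only genuinely delicate point.
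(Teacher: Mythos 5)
Your overall skeleton matches the paper's proof: since $G$ is discrete and $c$ is continuous, the sets $c\inv(\{g\})$ are clopen, so $\Gamma_c(\Gg;\Ee)$ is the algebraic direct sum $\bigoplus_{g\in G}\Gamma_c(c\inv(g);\Ee)$; you define $\delta_0$ on homogeneous sections, check the homomorphism property and the coaction identity there, and obtain coaction-nondegeneracy from Lemma~\ref{lem:coactionnondegeneracy} because the spectral subspaces are exactly the closures of the spaces $\Gamma_c(c\inv(g);\Ee)$, which densely span $C^*(\Gg,\Ee)$. All of that is correct and is precisely the structure of the paper's argument.

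The gap is in your boundedness step, which you correctly flag as the only delicate point. Your slice-map suggestion fails outright: $\mathrm{id}\otimes\phi$ for a state $\phi$ is not multiplicative, so composing $\delta_0$ with slice maps yields neither $*$-homomorphisms nor any bound on $\|\delta_0(f)\|$ that you do not already possess. Your second route hinges on the assertion that ``each bounded representation of $C^*(\Gg,\Ee)\otimes C^*(G)$ pulls back to a bounded representation of $\Gamma_c(\Gg;\Ee)$,'' but in this paper ``bounded representation'' means $I$-norm bounded, and that is exactly what needs proof: the naive estimate gives only $\|L(\delta_0(f))\|\leq\sum_g\|f_g\|_I$, and $\sum_g\|f_g\|_I$ can strictly exceed $\|f\|_I$ because the suprema over units defining the $I$-norm are taken separately for each $g$. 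As stated, the claim begs the question. The paper instead verifies that $\delta_0$ is continuous in the inductive-limit topology --- genuinely routine here, since if $f_i\to f$ uniformly with supports in a fixed compact $K$, then $K$ meets only finitely many $c\inv(g)$ and, for sections supported in $K$, the universal norm is dominated by a fixed multiple of the sup-norm --- and then extends $\delta_0$ to $C^*(\Gg,\Ee)$ because inductive-limit-continuous $*$-homomorphisms out of $\Gamma_c(\Gg;\Ee)$ are automatically dominated by the universal norm; this automatic-boundedness fact is where the disintegration theorem of Muhly and Williams \cite{MW2008} does the real work. Your pullback bundle over $\Gg\times G$ is not unsalvageable --- the map $f\mapsto\tilde f$ with $\tilde f(\gamma,g)=\delta_{g,c(\gamma)}f(\gamma)$ is $I$-norm isometric, hence contractive for the universal norms --- but you would then still need a homomorphism from $C^*(\Gg\times G;\tilde{\Ee})$ to $C^*(\Gg,\Ee)\otimes C^*(G)$, and constructing it runs into the same automatic-boundedness problem: this route relocates the disintegration-type input rather than avoiding it.
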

\begin{proof}
As a vector space, $\Gamma_c(\Gg; \Ee)$ is equal to the algebraic direct sum
$\bigoplus_{g \in G} \Gamma_c(c^{-1}(g); \Ee)$. So there is a linear map $\delta :
\Gamma_c(\Gg; \Ee) \to \Gamma_c(\Gg; \Ee) \otimes C^*(G)$ such that $\delta(f) = f
\otimes i_G$ whenever $f \in \Gamma_c(c^{-1(g)}; \Ee)$. It is routine to check that this
map is continuous in the inductive-limit topology, and therefore extends to a
homomorphism $\delta : C^*(\Gg, \Ee) \to C^*(\Gg, \Ee) \otimes C^*(G)$. An elementary
calculation checks the coaction identity on $f \in \Gamma_c(c^{-1}(g); \Ee)$, which
suffices by linearity and continuity. To check that $\delta$ is coaction-nondegenerate,
observe that the spectral subspaces $C^*(\Gg, \Ee)_g$ are precisely the spaces
$\overline{\Gamma_c(c^{-1}(g)); \Ee)}$. By definition, $C^*(G, \Ee)$ is the closure of
$\Gamma_c(\Gg; \Ee)$, which is spanned by the spaces $\Gamma_c(c^{-1}(g)); \Ee)$.  It
follows that $C^*(\Gg, \Ee)$ is densely spanned by its spectral subspaces, and so
$\delta$ is coaction-nondegenerate by Lemma~\ref{lem:coactionnondegeneracy}.
\end{proof}

Recall that the Cuntz--Nica--Pimsner algebra $\NO_X$ has a quotient $\NO^r_X$ that
possesses a co-universal property described in \cite[Theorem~4.1]{CLSV2009}.

\begin{proof}[Proof of Theorem \ref{thm:CNPisomorphism}]
To show that $\Pi$ is an isomorphism, it is enough to show that the homomorphism $\Phi =
\Psi^{-1}$ of~\eqref{eqn:well-defined} factors through the quotient map
\[
\rho : C^*(\Gg,\Ee) \to C^*(\Gg|_{\partial\Omega}, \Ee)
\]
defined on $\Gamma_C(\Gg;\Ee)$ by
\[
\rho(f) = f|_{\Gg|_{\partial\Omega}}.
\]
To see this we use the co-universal property of $\NO_X^r$. Since $\Gg|_{\partial\Omega}$
is $G$-graded via $(g,\omega) \mapsto g$, Lemma~\ref{lemma:coactionfromgrading} gives a
coaction $\beta : C^*(\Gg|_{\partial\Omega}, \Ee) \to C^*(\Gg|_{\partial\Omega}, \Ee)
\otimes C^*(G)$ such that
\[
 \beta(f^S) = f^S \otimes i_G(qp\inv)\quad\text{ for all $X \in \Kk(X_p, X_q)$}.
\]
For any $x\in X_p$, we have
$$
\beta(\pi(x)) = \beta(f^x) = f^x \otimes i_G(p) = ((\pi \otimes 1) \circ \delta)(j_X(x)),
$$
where $j_X : X \to \NO_X$ is the universal representation. So $\pi$ is gauge-compatible
in the sense of \cite{CLSV2009}. We aim to apply \cite[Theorem~4.1]{CLSV2009} to $\pi$,
so we must show that $\pi_e : A \to C^*(\Gg|_{\partial\Omega}, \Ee)$ is injective. Since
the $\phi_p$ are injective, the maps $i_r : \Ll(X_p) \to \Ll(X_{pr})$ appearing in the
construction of the fibres $A_\omega$, $\omega \in \Gg^{(0)}$ in
Section~\ref{subsec:fell-fibres} are all injective. Hence the canonical map $i_\omega : A
= X_e \to X_\omega$ is injective for each unit $\omega$. In particular, for each $a \in
A$, the element $\pi_e(A) := f^a$ satisfies $f^a(\omega) = i_\omega(a) \not= 0$ for all
$\omega$, and $\pi_e$ is injective.

Now, writing $\lambda_r$ for the canonical quotient map from $\NO_X$ to $\NO_X^r$,
\cite[Theorem 4.1]{CLSV2009} yields a homomorphism
\[
 \phi : C^*(\Gg|_{\partial\Omega},\Ee) \to \NO_X^r
\]
that carries $f^S$ to $\lambda_r(j_X^{(p)}(S)$ for $S \in \Kk(X_p)$.

Fix $f\in \ker(\rho)$. Without loss of generality, assume that ${\rm supp}(f) \subset
\Gg$ is a bisection. Then $\phi(\rho(f)) = 0$ and hence $\phi(\rho(f^*f)) = 0$. So we
have $\lambda_r(\rho(\Phi(f^*f))) = 0$. But $\rho(\Phi(f^*f)) \in (\NO_X)_e$ and
$\lambda_r|_{(\NO_X)_e}$ is isometric because the reduction map for any coaction is
isometric on each spectral subspace. Hence
\[
    \|q(\Phi(f))\|^2 = \|q(\Phi(f^*f))\| = 0
\]
as required.
\end{proof}

\section{Applications}\label{sec:applications}

Takeishi \cite{T2013} has recently characterised nuclearity for $C^*$-algebras of Fell
bundles over \'etale groupoids as follows.

\begin{theorem}[{\cite[Theorem 4.1]{T2013}}]
Let $\Ee$ be a Fell bundle over an \'etale locally compact Hausdorff groupoid $\Gg$. If
$\Gg$ is amenable, then the following conditions are equivalent
\begin{itemize}
 \item[(i)] The $C^*$-algebra $C^*_r(\Ee)$ is nuclear.
 \item[(ii)] The fibre $E_x$ is nuclear for every $x\in G^{(0)}$.
 \item[(iii)] The $C^*$-algebra $C_0(\Ee|_{G^{(0)}}, G^{(0)})$ is nuclear.
\end{itemize}
\end{theorem}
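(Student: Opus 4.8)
The plan is to prove the two equivalences (i)$\Leftrightarrow$(iii) and (ii)$\Leftrightarrow$(iii) separately, treating $C_0(\Ee|_{\Gg^{(0)}},\Gg^{(0)})$ as the hub. Throughout I would work with the characterisation of nuclearity by the completely positive approximation property (CPAP), together with the standard permanence facts: quotients of nuclear algebras are nuclear; $\Kk(\Hh)\otimes B$ is nuclear whenever $B$ is; and if $B\subseteq A$ is the range of a conditional expectation and $A$ is nuclear, then $B$ is nuclear, since one may factor the CPAP maps for $A$ through the expectation.

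For (ii)$\Leftrightarrow$(iii), the key observation is that $C_0(\Ee|_{\Gg^{(0)}},\Gg^{(0)})$ is the section algebra of an upper-semicontinuous bundle of $C^*$-algebras over the locally compact Hausdorff space $\Gg^{(0)}$ with fibres $E_x$; equivalently, it is a $C_0(\Gg^{(0)})$-algebra whose fibre at $x$ is $E_x$. The direction (iii)$\Rightarrow$(ii) is then immediate: each evaluation $\mathrm{ev}_x : C_0(\Ee|_{\Gg^{(0)}},\Gg^{(0)})\to E_x$ is a surjective $*$-homomorphism, so $E_x$ is a quotient of a nuclear algebra. For (ii)$\Rightarrow$(iii) I would invoke the standard result that a $C_0(X)$-algebra is nuclear as soon as all of its fibres are nuclear.

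For (i)$\Rightarrow$(iii), I would use that $\Gg^{(0)}$ is open in the \'etale groupoid $\Gg$, so that extension by zero identifies $C_0(\Ee|_{\Gg^{(0)}},\Gg^{(0)})$ with the $*$-subalgebra of $C^*_r(\Ee)$ generated by sections supported on the unit space, and restriction of sections to $\Gg^{(0)}$ is a faithful conditional expectation $P : C^*_r(\Ee)\to C_0(\Ee|_{\Gg^{(0)}},\Gg^{(0)})$. Hence $C_0(\Ee|_{\Gg^{(0)}},\Gg^{(0)})$ is the range of a conditional expectation on a nuclear algebra, so is itself nuclear. Note that this direction uses neither amenability nor any special structure beyond \'etaleness.

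The substance, and the step I expect to be the main obstacle, is (iii)$\Rightarrow$(i), where amenability of $\Gg$ enters. Here I would use an invariant-mean formulation of amenability: for every compact $K\subseteq\Gg$ and every $\varepsilon>0$ there is a nonnegative $\xi\in C_c(\Gg)$ with $\sum_{r(\gamma)=x}\xi(\gamma)^2$ close to $1$ for $x$ in the appropriate compact set and $\sum_{\gamma}|\xi(\alpha^{-1}\gamma)-\xi(\gamma)|^2$ small for $\alpha\in K$. Using an approximate unit for the coefficient algebra $C_0(\Ee|_{\Gg^{(0)}},\Gg^{(0)})$ to lift these scalar weights into the bundle, such data produce a net of completely positive contractive maps $\phi_i : C^*_r(\Ee)\to \Kk(\Hh)\otimes C_0(\Ee|_{\Gg^{(0)}},\Gg^{(0)})$ together with completely positive contractive maps $\psi_i$ back, whose composites converge pointwise to the identity on a dense $*$-subalgebra and hence on all of $C^*_r(\Ee)$. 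Since $C_0(\Ee|_{\Gg^{(0)}},\Gg^{(0)})$ is nuclear by (iii) and nuclearity is stable under tensoring by $\Kk(\Hh)$, the intermediate algebras are nuclear, so the resulting CPAP shows that $C^*_r(\Ee)$ is nuclear. The delicate points on which I would expect to spend the most effort are verifying complete positivity and contractivity of the bundle-valued maps $\phi_i,\psi_i$ (rather than in the scalar groupoid case) and controlling the approximation uniformly across the fibres; amenability is used precisely to make the composites asymptotically multiplicative and to absorb the off-diagonal part of the Fell-bundle convolution.
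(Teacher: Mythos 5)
This statement is quoted verbatim from Takeishi \cite[Theorem~4.1]{T2013}; the paper under review gives no proof of it at all, so the only meaningful comparison is with Takeishi's published argument. Your two easy implications are correct and match the standard architecture: (i)$\Rightarrow$(iii) via the canonical conditional expectation of $C^*_r(\Ee)$ onto $C_0(\Ee|_{\Gg^{(0)}},\Gg^{(0)})$ (which exists because $\Gg^{(0)}$ is clopen in the \'etale groupoid $\Gg$, and needs neither amenability nor faithfulness), and (iii)$\Rightarrow$(ii) via the evaluation surjections onto the fibres. One caveat on (ii)$\Rightarrow$(iii): the assertion that a $C_0(X)$-algebra is nuclear as soon as all its fibres are nuclear is true, but it is not an off-the-shelf triviality --- it is itself one of the main technical results established in \cite{T2013} (nuclearity of a $C_0(X)$-algebra is \emph{equivalent} to nuclearity of all fibres, proved by comparing maximal and minimal tensor products fibrewise). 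Invoking it as ``standard'' quietly outsources a genuine piece of the theorem.

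The genuine gap is in (iii)$\Rightarrow$(i), where the entire content of the theorem is compressed into the sentence asserting that amenability ``produces'' nets of completely positive contractions $\phi_i, \psi_i$ with composites converging to the identity; this is the theorem, not a step of its proof. Two concrete problems with the sketch as written. First, the intermediate algebra cannot be $\Kk(\Hh)\otimes C_0(\Ee|_{\Gg^{(0)}},\Gg^{(0)})$: compressing the Fell-bundle convolution to a compact $K \subseteq \Gg$ yields ``matrices'' whose $(\alpha,\beta)$ entries live in the fibres $E_{\alpha\beta^{-1}}$, not in the unit-space algebra. One must cover $K$ by finitely many bisections and realise the compression inside $\Kk(M)$ for a suitable Hilbert $C_0(\Ee|_{\Gg^{(0)}},\Gg^{(0)})$-module $M$ of sections; this still yields a nuclear algebra (compacts on a Hilbert module over a nuclear algebra are nuclear, via Morita equivalence with an ideal), but the identification has to be constructed, and it is precisely here that the Fell-bundle structure, as opposed to the scalar groupoid case, does work. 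Second, complete positivity of the cutting maps is not automatic from almost-invariance of the weights: one needs the functions $h_i(\gamma)=\langle \xi_i, \gamma\cdot\xi_i\rangle$ to be of positive type, so that the Schur-type multipliers $f \mapsto h_i f$ are completely positive and contractive on $C^*_r(\Ee)$, with amenability entering through $h_i \to 1$ uniformly on compact sets; your phrasing about ``absorbing the off-diagonal part'' does not isolate this mechanism. In short, your outline points at the right Anantharaman-Delaroche--Renault-style approximation scheme, consistent with how \cite{T2013} proceeds, but the decisive approximation argument is missing rather than routine.
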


For our example, the following lemma shows that (ii) holds whenever the coefficient
algebra $X_e$ of the product system $X$ is nuclear.

\begin{lemma}\label{lem:nuclear fibres}
Let $(G,P)$ be a quasi-lattice ordered group, and let $X$ be a nondegenerate finitely
aligned product system over $P$. If the coefficient algebra $X_e$ of the product system is
nuclear, then the fibres $A_\omega$, $\omega\in\Omega = \Gg^{(0)}$ are nuclear.
\end{lemma}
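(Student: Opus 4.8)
The plan is to exhibit each fibre $A_\omega$ as a direct limit of nuclear $C^*$-algebras and then appeal to the permanence of nuclearity under direct limits. Recall from Section~\ref{subsec:fell-fibres} (taking $g = e$, so that $[e \vee e, \omega) = P \cap \omega$) that $A_\omega = \clsp\bigcup_{p \in P \cap \omega} i_{(e,\omega)}(\Kk(X_p))$. As in the proof of Lemma~\ref{lem:fibres}, for each finite $\vee$-closed $F \subseteq P \cap \omega$ with maximum $p_F$ the sum $B_F := \sum_{s \in F} i_{s\inv p_F}(\Kk(X_s)) \subseteq \Ll(X_{p_F})$ is a $C^*$-algebra. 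Since the connecting maps compose as $i_r \circ i_{r'} = i_{r'r}$, we have $i_{p_F\inv p_{F'}}(B_F) \subseteq B_{F'}$ whenever $F \subseteq F'$, so $A_\omega$ is the direct limit of the $B_F$ along these maps. As a direct limit of nuclear $C^*$-algebras is nuclear, it therefore suffices to prove that each $B_F$ is nuclear.

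First I would settle the base case $F = \{p\}$, where $B_{\{p\}} = \Kk(X_p)$. The module $X_p$ is a $\Kk(X_p)$--$\clsp\langle X_p, X_p\rangle_A$ imprimitivity bimodule, so $\Kk(X_p)$ is strongly Morita equivalent to the ideal $\clsp\langle X_p, X_p\rangle_A$ of $A = X_e$. Ideals of nuclear $C^*$-algebras are nuclear, and nuclearity is a Morita invariant, so $\Kk(X_p)$ is nuclear.

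For the inductive step I would reuse the ideal structure already extracted in the proof of Lemma~\ref{lem:fibres}. If $|F| > 1$, choose a minimal $q \in F$ and put $F' := F \setminus \{q\}$; then $p_{F'} = p_F$, $B_F = i_{q\inv p_F}(\Kk(X_q)) + B_{F'}$, and $B_{F'}$ is a two-sided ideal of $B_F$. The quotient $B_F / B_{F'}$ is isomorphic to $i_{q\inv p_F}(\Kk(X_q)) / (i_{q\inv p_F}(\Kk(X_q)) \cap B_{F'})$, which is a quotient of $\Kk(X_q)$ and hence nuclear by the base case. Assuming inductively that $B_{F'}$ is nuclear, the extension $0 \to B_{F'} \to B_F \to B_F/B_{F'} \to 0$ together with the stability of nuclearity under extensions gives that $B_F$ is nuclear, completing the induction and the proof.

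Every step beyond the base case is formal, relying only on the standard permanence properties of nuclearity (under Morita equivalence, passage to ideals and quotients, extensions, and direct limits). I expect the only substantive input to be the base case, namely the nuclearity of $\Kk(X_p)$; once this is in hand, the Morita/extension/direct-limit bookkeeping---whose inductive skeleton is already supplied by Lemma~\ref{lem:fibres}---assembles the result with no further obstacle.
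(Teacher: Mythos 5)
Your proposal is correct and follows essentially the same route as the paper's proof: realise $A_\omega$ as the direct limit of the $C^*$-algebras $B_F$ from Lemma~\ref{lem:fibres}, prove nuclearity of each $B_F$ by induction on $|F|$ using the ideal $B_{F'}$, the quotient of $\Kk(X_q)$, and stability of nuclearity under quotients and extensions, and finish with permanence under direct limits. Your only deviation is in the base case, where you are in fact slightly more careful than the paper: you invoke Morita equivalence of $\Kk(X_p)$ with the ideal $\clsp\langle X_p, X_p\rangle_A$ of $X_e$ (thus handling possible non-fullness of $X_p$), whereas the paper asserts Morita equivalence with $X_e$ itself.
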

\begin{proof}
Fix $\omega \in \Omega$. Arguing as in Lemma~\ref{lem:fibres}, for each finite $F
\subseteq \omega$ that is closed under $\vee$, writing $p_F$ for the maximum element of
$F$ the set $B_F = \sum_{p \in F} i_{p^{-1} p_F}(\Kk(X_p))$ is a $C^*$-algebra. If $F$ is
not a singleton and $q \in F$ is minimal, then $B_{F \setminus\{q\}}$ is an ideal of
$B_F$ and the quotient $B_F/B_{F \setminus \{q\}}$ is a quotient of $i_{q^{-1}
p_F}(\Kk(X_q))$ and hence a quotient of $\Kk(X_q)$.

Each $\Kk(X_p)$ is nuclear because it is Morita equivalent to $X_e$ via $X_p$, and
nuclearity is preserved by Morita equivalence \cite[Theorem 15]{aHRW2007}. Fix a finite
$F \subseteq \omega$ and a minimal $q \in F$, and write $F' = F \setminus\{q\}$. Assume
as an inductive hypothesis that $B_{F'}$ is nuclear. Since $B_F/B_{F'}$ is a quotient of
the nuclear $C^*$-algebra $\Kk(X_q)$, it is nuclear. So $B_F$ is an extension of a
nuclear $C^*$-algebra by a nuclear $C^*$-algebra, so also nuclear \cite[Proposition
2.1.2(iv)]{RS2010}. Now $A_\omega = \varinjlim_F B_F$ is nuclear because direct limits of
nuclear $C^*$-algebras are nuclear.
\end{proof}

We therefore have the following theorem.

\begin{theorem}
Let $X$ be a nondegenerate finitely-aligned product system over a quasi-lattice ordered
group $(G,P)$, and suppose that the coefficient algebra $X_e$ is nuclear. If the groupoid
$\Gg$ of Section~\ref{sec:Fell bundle} is amenable, then $\NT_X$ and $\NO_X$ is nuclear.
If $\Gg|_{\partial\Omega}$ is amenable and the homomorphisms $\phi_p : A \to \Ll(X_p)$
implementing the left actions in $X$ are all injective, then $\NO_X$ is nuclear.
\end{theorem}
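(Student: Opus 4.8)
The plan is to read this off from the structural results already in hand, combined with Takeishi's nuclearity criterion stated above. There are three inputs. Theorems~\ref{thm:Toeplitzisomorphism} and~\ref{thm:CNPisomorphism} identify $\NT_X$ and $\NO_X$ with the full cross-sectional algebras $C^*(\Gg,\Ee)$ and $C^*(\Gg|_{\partial\Omega},\Ee)$; Lemma~\ref{lem:nuclear fibres} supplies nuclearity of the unit-space fibres $A_\omega$ whenever $X_e$ is nuclear; and Takeishi's theorem converts fibrewise nuclearity over an amenable groupoid into nuclearity of the reduced cross-sectional algebra. The only genuine work is to splice these together, taking care about the full-versus-reduced distinction.

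For the first statement, suppose $\Gg$ is amenable and $X_e$ is nuclear. By Lemma~\ref{lem:nuclear fibres} every fibre $A_\omega$, $\omega\in\Gg^{(0)}=\Omega$, is nuclear, which is condition (ii) of Takeishi's theorem; hence $C^*_r(\Gg,\Ee)$ is nuclear. Because $\Gg$ is amenable, the canonical surjection $C^*(\Gg,\Ee)\to C^*_r(\Gg,\Ee)$ is an isomorphism, so $C^*(\Gg,\Ee)$ is nuclear, and Theorem~\ref{thm:Toeplitzisomorphism} gives nuclearity of $\NT_X$. Since $\NO_X$ is by construction a quotient of $\NT_X$ (a Cuntz--Nica--Pimsner covariant representation is in particular Nica covariant), and quotients of nuclear $C^*$-algebras are nuclear, $\NO_X$ is also nuclear. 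This half needs no injectivity hypothesis, since $\NO_X$ exists as a quotient of $\NT_X$ regardless.

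For the second statement I would run the identical argument on the boundary. The unit space of $\Gg|_{\partial\Omega}$ is $\partial\Omega\subseteq\Omega$, and its fibres are exactly the $A_\omega$ with $\omega\in\partial\Omega$, again nuclear by Lemma~\ref{lem:nuclear fibres}. Takeishi's theorem, applied to the restricted bundle over the amenable groupoid $\Gg|_{\partial\Omega}$, makes $C^*_r(\Gg|_{\partial\Omega},\Ee)$ nuclear; amenability then identifies this with the full algebra $C^*(\Gg|_{\partial\Omega},\Ee)$, which Theorem~\ref{thm:CNPisomorphism}---using the injectivity of the $\phi_p$---identifies with $\NO_X$.

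The main obstacle is bookkeeping rather than mathematics: Takeishi's criterion is phrased for the reduced algebra $C^*_r(\Ee)$, whereas our isomorphisms land in the full algebras. The bridging step is the standard fact that for a Fell bundle over an amenable \'etale groupoid the full and reduced cross-sectional algebras coincide, which I would cite rather than reprove. I would also check that \emph{amenable} is used in the same sense in Takeishi's theorem as in the hypotheses here, so that the two appeals to amenability are compatible.
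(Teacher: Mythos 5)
Your proposal is correct and follows essentially the same route as the paper: Lemma~\ref{lem:nuclear fibres} for nuclearity of the fibres, Takeishi's theorem over the amenable groupoid, the isomorphisms of Theorems~\ref{thm:Toeplitzisomorphism} and~\ref{thm:CNPisomorphism}, and the quotient argument for $\NO_X$ in the first statement. In fact you are slightly more careful than the paper, which tacitly passes from Takeishi's criterion for $C^*_r(\Gg,\Ee)$ to the full algebra; your explicit appeal to the coincidence of full and reduced cross-sectional algebras over an amenable groupoid (the fact the paper itself cites from \cite{SW} in the following proposition) closes that small gap.
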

\begin{proof}
If $\Gg$ is amenable, then $C^*(\Gg, \Ee)$ is amenable by \cite[Theorem 4.1]{T2013} and
Lemma~\ref{lem:nuclear fibres}. Since $\NT_X \cong C^*(\Gg, \Ee)$ by
Theorem~\ref{thm:Toeplitzisomorphism}, we have $\NT_X$ nuclear, and then $\NO_X$ (as
defined in \cite{SY}) is nuclear because it is a quotient of $\NT_X$. If
$\Gg|_{\partial\Omega}$ is amenable then $C^*(\Gg|_{\partial\Omega}, \Ee)$ is nuclear by
\cite[Theorem 4.1]{T2013} and Lemma~\ref{lem:nuclear fibres}. If the $\phi_p$ are
injective, then Theorem~\ref{thm:CNPisomorphism} gives an isomorphism $\NO_X \cong
C^*(\Gg_{\partial\Omega}, \Ee)$, and so $\NO_X$ is nuclear.
\end{proof}

We also obtain an improvement on \cite[Corollary~4.2]{CLSV2009}. There it is proved that
$\NO_X$ and $\NO_X^r$ coincide whenever the group $G$ is amenable. But our results show
that in fact $\NO_X = \NO_X^r$ whenever $\Gg|_{\partial \Omega}$ is amenable.

\begin{proposition}
Let $X$ be a nondegenerate finitely aligned product system over a quasi-lattice ordered
group $(G,P)$, and suppose that the homomorphism $\phi_p : X_e \to \Ll(X_p)$ implementing
the left actions in $X$ are all injective. If $\Gg|_{\partial\Omega}$ is amenable, then
the quotient map $\lambda_r : \NO_X \to \NO_X^r$ is an isomorphism.
\end{proposition}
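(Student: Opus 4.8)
The plan is to prove that $\lambda_r$ is injective; since it is by construction a surjective quotient map, this will give the isomorphism. By Theorem~\ref{thm:CNPisomorphism} the representation $\pi$ induces an isomorphism $\Pi : \NO_X \to C^*(\Gg|_{\partial\Omega},\Ee)$, and the proof of that theorem produces a homomorphism $\phi : C^*(\Gg|_{\partial\Omega},\Ee) \to \NO_X^r$ with $\phi(f^S) = \lambda_r(j_X^{(p)}(S))$ for $S \in \Kk(X_p)$. Comparing values on the generators $f^x$, $x \in X_p$, where $\phi(f^x) = \lambda_r(j_X(x))$ and $\Pi(j_X(x)) = \pi(x) = f^x$, one sees that $\phi \circ \Pi = \lambda_r$. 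As $\Pi$ is an isomorphism, it therefore suffices to prove that $\phi$ is injective.

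The key is to combine equivariance of $\phi$ with amenability through a conditional-expectation argument. First I would record that $\phi$ is equivariant for the gauge coactions: the coaction $\beta$ on $C^*(\Gg|_{\partial\Omega},\Ee)$ from Lemma~\ref{lemma:coactionfromgrading} satisfies $\beta(f^x) = f^x \otimes i_G(p)$ for $x \in X_p$, while the descended gauge coaction on $\NO_X^r$ satisfies $\lambda_r(j_X(x)) \mapsto \lambda_r(j_X(x)) \otimes i_G(p)$, so $\phi$ intertwines the two coactions since the $f^x$ generate. Each coaction determines the canonical conditional expectation onto its degree-$e$ spectral subspace, and equivariance of $\phi$ yields $E \circ \phi = \phi \circ \Phi$, where $\Phi$ and $E$ are the two expectations. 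I would then verify that $\phi$ is injective on the core. Because the grading $c(g,\omega) = g$ satisfies $c\inv(\{e\}) = \Gg^{(0)}|_{\partial\Omega}$, the spectral subspace $C^*(\Gg|_{\partial\Omega},\Ee)_e$ is precisely the unit-fibre algebra $C_0(\partial\Omega;(A_\omega))$, and $\Phi$ is the expectation given on sections by restriction to the unit space. Since $\Pi$ is an equivariant isomorphism it carries $(\NO_X)_e$ isometrically onto this core, and on the core $\phi$ agrees with $\lambda_r \circ (\Pi|_{(\NO_X)_e})\inv$; as $\lambda_r$ is isometric on each spectral subspace (as used in the proof of Theorem~\ref{thm:CNPisomorphism}), $\phi$ is isometric, hence injective, on the core.

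Finally I would invoke amenability. Amenability of $\Gg|_{\partial\Omega}$ forces the regular representation $C^*(\Gg|_{\partial\Omega},\Ee) \to C^*_r(\Gg|_{\partial\Omega},\Ee)$ to be an isomorphism, so the canonical expectation $\Phi$ onto $C_0(\partial\Omega;(A_\omega))$ is faithful. Now if $\phi(b) = 0$, then $\phi(\Phi(b^*b)) = E(\phi(b^*b)) = 0$, and since $\Phi(b^*b)$ lies in the core on which $\phi$ is injective we obtain $\Phi(b^*b) = 0$; faithfulness of $\Phi$ then gives $b^*b = 0$, whence $b = 0$. Thus $\phi$ is injective, and $\lambda_r = \phi \circ \Pi$ is an isomorphism.

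I expect the main obstacle to be the two structural inputs that make the expectation argument run: the identification of the core of $C^*(\Gg|_{\partial\Omega},\Ee)$ with the unit-fibre algebra, on which $\phi$ inherits injectivity from the isometry of $\lambda_r$ on spectral subspaces, and, crucially, the faithfulness of the expectation $\Phi$ obtained from amenability of $\Gg|_{\partial\Omega}$ (equivalently, the coincidence of the full and reduced cross-sectional algebras). The equivariance bookkeeping and the reduction to injectivity of $\phi$ are routine.
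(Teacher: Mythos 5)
Your proof is correct, but it closes the argument by a genuinely different final step than the paper. The shared skeleton is the same: the isomorphism $\Pi$ of Theorem~\ref{thm:CNPisomorphism}, the identification via the cocycle $c(g,\omega) = g$ of the $e$-spectral subspace of $C^*(\Gg|_{\partial\Omega},\Ee)$ with $\Gamma_0(\Gg^{(0)};\Ee)$, and---crucially---the identical amenability input, namely that \cite[Theorem~1]{SW} forces $C^*(\Gg|_{\partial\Omega},\Ee) = C^*_r(\Gg|_{\partial\Omega},\Ee)$, so that the restriction-to-unit-space expectation $\Phi$ is faithful. Where you diverge is in how faithfulness is exploited. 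You factor $\lambda_r = \phi \circ \Pi$ through the co-universal-property map $\phi$ of \cite[Theorem~4.1]{CLSV2009} (already constructed in the proof of Theorem~\ref{thm:CNPisomorphism}) and prove $\phi$ injective directly by the standard expectation sandwich $E \circ \phi = \phi \circ \Phi$, using that $\phi$ is isometric on the core because $\lambda_r$ is isometric on $(\NO_X)_e$; here the expectation $E$ on $\NO_X^r$ exists by \cite[Lemma~1.3(a)]{Q96}, which the paper cites for the parallel expectation on $\NO_X$. The paper instead transports the faithful expectation $R$ on $C^*(\Gg|_{\partial\Omega},\Ee)$ to a faithful expectation $P : \NO_X \to (\NO_X)_e$ via the intertwining relation $\Pi \circ P = R \circ \Pi$, concludes that the gauge coaction $\nu$ on $\NO_X$ is \emph{normal}, and then invokes \cite[Corollary~4.6]{CLSV2009}, which identifies $\lambda_r$ as the normalisation homomorphism of the cosystem $(\NO_X, G, \nu)$; normality of the cosystem then gives injectivity of $\lambda_r$ at once. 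Your route buys self-containment: it avoids the normalisation-of-cosystems machinery of Corollary~4.6, relying only on Theorem~4.1 together with facts (equivariance, isometry of $\lambda_r$ on the $e$-spectral subspace, $\Pi((\NO_X)_g) = \overline{\Gamma_c(c\inv(g);\Ee)}$) that the paper itself already establishes in proving Theorem~\ref{thm:CNPisomorphism}. The paper's route is shorter given Corollary~4.6 and yields the extra structural conclusion, of independent interest, that $(\NO_X, G, \nu)$ is a normal cosystem. The two obstacles you flagged---the identification of the core and the faithfulness of $\Phi$ from amenability---are handled in the paper exactly as you anticipated.
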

\begin{proof}
Theorem~\ref{thm:CNPisomorphism} gives an isomorphism $\Pi^{-1} :
C^*(\Gg|_{\partial\Omega}, \Ee) \to \NO_X$. Write $c : \Gg \to G$ for the continuous
cocycle $c(g,\omega) = g$. Since $\operatorname{supp}\pi(x) \subseteq \{p\} \times
\partial\Omega$ whenever $x \in X_p$, we see that $\Pi((\NO_X)_g) =
\overline{\Gamma_c(c^{-1}(g); \Ee)}$ for each $g$. In particular $\Pi^{-1}$ restricts to
an isomorphism of the closure of $\Gamma_c(c^{-1}(e); \Ee) \subseteq C^*(\Gg, \Ee)$ with
$(\NO_X)_e$. Since $c^{-1}(e) = \Gg^{(0)}$, the closure of $\Gamma_c(c^{-1}(e); \Ee)$ is
$\Gamma_0(\Gg^{(0)}; \Ee) \subseteq C^*(\Gg, \Ee)$. It is standard that restriction of
compactly supported sections to $\Gg^{(0)}$ extends to a faithful conditional expectation
$C^*_r(\Gg, \Ee) \to \Gamma_0(\Gg^{(0)}; \Ee)$. Theorem~1 of \cite{SW} implies that
$C^*(\Gg|_{\partial\Omega}, \Ee) = C^*_r(\Gg|_{\partial\Omega}, \Ee)$, so we obtain a
faithful conditional expectation $R : C^*(\Gg, \Ee) \to \Gamma_0(\Gg^{(0)}; \Ee)$
extending restriction of compactly supported sections. Lemma~1.3(a) of \cite{Q96} shows
that there is a conditional expectation $P : \NO_X \to (\NO_X)_e$ that annihilates
$(\NO_X)_g$ for $g \not= e$, and it is routine to check that $\Pi \circ P = R \circ \Pi$.
Since $\Pi$ is an isomorphism and $R$ is a faithful conditional expectation, it follows
that $P$ is a faithful conditional expectation as well. That is, the coaction $\nu$ on
$\NO_X$ such that $\delta(j_X(x)) = j_X(x) \otimes i_G(p)$ for $x \in X_p$ is a normal
coaction, and $(\NO_X, G, \nu)$ is a normal cosystem. Corollary~4.6 of \cite{CLSV2009}
shows that $\NO_X^r$ is the $C^*$-algebra appearing in the normalisation of the cosystem
$(\NO_X, G, \nu)$, and $\lambda_r$ is the normalisation homomorphism. Since this cosystem
is already normal, we conclude that $\lambda_r$ is injective.
\end{proof}

\begin{remark}
It is worth pointing out, in light of the results in this section, that it is not
uncommon for the groupoid $\Gg|_{\partial\Omega}$ of Section~\ref{sec:Fell bundle} to be
amenable, even when $G$ is not amenable. For example, $\Gg|_{\partial\Omega}$ is amenable
when $G$ is a finitely generated free group---or more generally a finitely-generated
right-angled Artin group---and $P$ its natural positive cone.
\end{remark}

\end{document}